\numberwithin{equation}{section}
\newtheorem{theorem}{Theorem}[section]
\newtheorem{definition}[theorem]{Definition}
\newtheorem{proposition}[theorem]{Proposition}
\newtheorem{corollary}[theorem]{Corollary}
\newtheorem{lemma}[theorem]{Lemma}
\newtheorem{remark}[theorem]{Remark}
\newtheorem*{theorem*}{Theorem}
\newtheorem*{main-theorem*}{Main Theorem}
\newcommand{\cali}[1]{\mathscr{#1}}
\newcommand{\Curlywedge}{\mathlarger \curlywedge}
\newcommand{\diff}{\text{\normalfont d}}
\newcommand{\supp}{{\rm supp}}
\newcommand{\ddc}{\text{\normalfont dd}^c}
\newcommand{\dc}{\text{\normalfont d}^c}
\newcommand{\PSH}{{\rm PSH}}
\newcommand{\B}{\mathbb{B}}
\newcommand{\C}{\mathbb{C}}
\newcommand{\N}{\mathbb{N}}
\renewcommand\P{\mathbb{P}}
\title{\bf Intersection of $(1,1)$-currents and the domain of definition of the Monge-Amp\` ere operator}
\author{Dinh Tuan Huynh, Lucas Kaufmann and Duc-Viet Vu}
\newcommand{\Addresses}{{

		\footnotesize
		\noindent
		\textsc{Dinh Tuan Huynh, Hua Loo-Keng center for Mathematical Sciences, Academy of Mathematics and System Science, Chinese Academy of Sciences, Beijing 100190, China 
\& Department of Mathematics, University of Education, Hue University, 34 Le Loi St., Hue City, Vietnam }
		\par\nopagebreak
		\noindent
		\textit{E-mail address}: \texttt{dinhtuanhuynh@hueuni.edu.vn}
		\newline
		
		\noindent
		\textsc{Lucas Kaufmann, Department of Mathematics,  National University of Singapore - 10, Lower Kent Ridge Road - Singapore 119076. \textit{Current address:} Center for Complex Geometry - Institute for Basic Science (IBS) - 55 Expo-ro Yuseong-gu Daejeon 34126 South Korea}
		\noindent
		\par\nopagebreak
		\noindent
		\textit{E-mail address}: \texttt{lucaskaufmann@ibs.re.kr; lucaskaufmann.math@gmail.com}
		\newline		
		
		\noindent
		\textsc{Duc-Viet Vu, University of Cologne, Mathematical Institute, Weyertal 86-90, 50931, K\"oln,  Germany}
		\noindent
		\par\nopagebreak
		\noindent
		\textit{E-mail address}: \texttt{vuviet@math.uni-koeln.de}	
}}
\begin{document}

\begin{abstract}  
We study the Monge-Amp\` ere operator within the framework of Dinh-Sibony's intersection theory defined via density currents. We show that if $u$ is a plurisubharmonic function belonging to the B\l ocki-Cegrell class, then the Dinh-Sibony $n$-fold self-product of $\ddc u$ exists and coincides with the classically defined Monge-Amp\`ere measure $(\ddc u)^n$.
\end{abstract}

\maketitle

\noindent


\noindent



\vspace{-40pt}

\section{Introduction}

Let $\Omega$ be a domain in $\C^n$ and let $u$ be a plurisubharmonic  (p.s.h.\ for short) function on $\Omega$. A question of central importance in pluripotential theory and its applications is whether one can define the Monge-Amp\` ere measure $$(\ddc u)^n = \ddc u \wedge \cdots \wedge \ddc u$$ in a meaningful way. Recall that $\dc:=  \frac{i}{2\pi} (\overline \partial - \partial)$ and $\ddc = \frac{i}{\pi} \partial \bar \partial$.

For bounded p.s.h.\ functions, the definition of $(\ddc u)^n$ and the study of its fundamental properties are due to Bedford-Taylor \cite{Bedford_Taylor_76}. The problem of finding the largest class of p.s.h.\ functions where the Monge-Amp\` ere operator is suitably defined and continuous under decreasing sequences  was studied for a long time and a complete characterization of this class was finally achieved by Cegrell \cite{cegrell:def-MA} and B\l ocki \cite{blocki:domain-MA}. We denote this class by $\mathcal D(\Omega)$ and call it the \textit{B\l ocki-Cegrell class}. 

\vskip5pt

The question of defining $(\ddc u)^n = \ddc u \wedge \cdots \wedge \ddc u$ is an instance of the fundamental problem of intersection of currents. Indeed, if we set $T := \ddc u$, then $T$ is a positive closed $(1,1)$-current on $\Omega$ and $(\ddc u)^n$ is the self-intersection $T^n = T \wedge \cdots \wedge T$.  The intersection theory of currents has been quite well-developed thanks to the work of many authors. The case of bi-degree $(1,1)$-currents is more accessible due to the existence of p.s.h.\ functions as local potentials. For this reason, this case was soon developed, see \cite{Chern_Levine_Nirenberg,Bedford_Taylor_76,Fornaess_Sibony,Demailly_ag}. Later on, other notions of intersection were introduced, such as the non-pluripolar product \cite{BT_fine_87,BEGZ} (see also \cite{Viet-generalized-nonpluri} for recent developments) and  the  Andersson-Wulcan product of $(1,1)$-currents with analytic singularities \cite{andersson-wulcan}. All these generalized notions differ from  classical ones by the fact that, in one way or another,  one removes the singular set of the currents before intersecting them. As a drawback, there is a mass loss in this procedure.  

A general intersection theory for currents of higher bi-degree was developed only later. Most notably, Dinh-Sibony proposed two different notions of intersection, one using what they call superpotentials \cite{DinhSibony_Pk_superpotential} and, more recently, another one based on the notion of density currents, that we consider here. We refer to the orginal paper \cite{Dinh_Sibony_density}  and also \cite{viet:imrn} for generalizations and simplified arguments. Both approaches have already found many applications in dynamical systems and foliation theory. 

The main goal of the present paper is to study the Monge-Amp\`ere operator from the point of view of theory of density currents. We now briefly recall this notion. More details are given in Section \ref{sec:densities}.

Let $X$ be a complex manifold and let $T_1,\ldots,T_m$ be positive closed currents on $X$.  Consider the Cartesian product $X^m$ and the positive closed current $\mathbf T = T_1 \otimes \cdots \otimes T_m$  on $X^m$. Let $\Delta=\{(x,\dots,x):x\in X\} \subset X^m$ be the diagonal and $N\Delta$ be its normal bundle inside $X^m$. Using a certain type of local coordinates $\tau$ in $X^m$ around $\Delta$ with values in $N\Delta$, which are called admissible maps, we can consider the current $\tau_* \mathbf T$ defined around the zero section of $N \Delta$.

For  $\lambda \in \C^*$, let $A_\lambda: N\Delta \to N\Delta$ be the fiberwise multiplication by  $\lambda$. A \textbf{density current} $R$ associated with $(T_1,\ldots,T_m)$ is a positive closed current on $N \Delta$ such that there exists a sequence of complex numbers  $\{\lambda_k\}_{k \in \N}$ converging to $\infty$ for which $R = \lim_{k \to \infty} (A_{\lambda_k})_* \tau_* \mathbf T,$ for every admissible map $\tau$. We then say that the \textbf{Dinh-Sibony product} $T_1 \, \Curlywedge\,  \cdots \, \Curlywedge \, T_m$ of $T_1,\ldots,T_m$ exists if there is only one density current $R$  associated with $(T_1,\ldots,T_m)$ and $R = \pi^* S$ for some positive closed current on $\Delta$, where $\pi: N \Delta \to \Delta$ is the canonical projection. In that case we define $$T_1 \, \Curlywedge\,  \cdots \, \Curlywedge \, T_m:= S.$$

Our main result is the following, see Theorem \ref{thm:MA=DS} below.

\begin{main-theorem*}  \label{thm:MA=DS-intro}
Let $\Omega$  be a domain in $\C^n$ and let $u_1,u_2, \ldots, u_m$, $1 \leq m \leq n$ be plurisubharmonic functions in the B\l ocki-Cegrell class. Then, the Dinh-Sibony product of  $\ddc u_1, \ldots, \ddc u_m$ is well-defined and
\begin{equation} \label{eq-BlockicegreallDSintro}
\ddc u_1 \, \Curlywedge \, \ldots  \, \Curlywedge \,  \ddc u_m = \ddc u_1 \wedge \ldots \wedge \ddc u_m.
\end{equation}
In particular, for every  $u$ in the B\l ocki-Cegrell class, the operator $u \mapsto (\ddc u)^{\Curlywedge n}:=\ddc u\,\Curlywedge\ldots\,\Curlywedge\,\ddc u $ is well-defined and coincides with the usual Monge-Amp\`ere operator.
\end{main-theorem*}

The last conclusion of the above theorem says, in other words,  that the domain of definition of the Monge-Amp\`ere operator defined via Dinh-Sibony's product contains the B\l ocki-Cegrell class. The right hand side of (\ref{eq-BlockicegreallDSintro}) is obtained in the standard way, i.e.\ , by considering sequences of smooth p.s.h.\ functions decreasing to $u_j$, $j=1,\ldots,m$, similarly to the case where $m=n$ and all the $u_j$ are equal.  The fact that this mixed product is well-defined and independent of the chosen sequence is the content of Proposition \ref{prop:blocki-cegrell-current} below. Although not explicitly stated in the literature, this fact might be well-known among experts. The case $m=n$ can be found in \cite{cegrell:def-MA} and the case $m<n$ covered by Proposition \ref{prop:blocki-cegrell-current} follows from simple modifications of arguments from \cite{cegrell:def-MA} and \cite{blocki:domain-MA}.

Our main theorem is a strengthening of Theorem 1.1 in  \cite{Viet_Lucas} proven by the last two authors and it follows from a more general result, see Theorem \ref{the-cegrell-DS}.   This yields  an optimal result that covers the most general case where the classical Monge-Amp\`ere operator is well-defined and continuous with respect to decreasing sequences. 

 The proof of Theorem \ref{the-cegrell-DS} in the present work uses different techniques than the ones in  \cite{Viet_Lucas}, providing new and more clear arguments  In particular, the integrability assumption on the $u_j$'s required in \cite{Viet_Lucas} cannot be dropped with the techniques used there, so they cannot be applied in our situation. Here, we bypass this difficulty and show that those assumptions are actually unnecessary, yielding the optimal result. In order to achieve that, we obtain almost everywhere vanishing properties for Lelong numbers with respect to singular currents in the considered class (cf. Lemma \ref{le-dkisuyraii}). Also, by systematically working with a well-chosen set of test forms (cf. Definition \ref{def:split-forms}), we can easily get the compactness of the family of dilated currents (Lemma \ref{lemma:bounded-mass} below), which was overlooked in \cite{Viet_Lucas}. We refer to the end of this introduction and the comments after Theorem \ref{the-cegrell-DS} for an overview of the arguments and the new ingredients of the proof.

It is worth mentioning that the many notions of intersection quoted here are related to one another.  As mentioned before, the present paper together with \cite{Viet_Lucas} show that the intersection via density currents cover all known classical products of $(1,1)$-currents. For higher bi-degree currents it is also known that the density product generalizes the product of currents with continuous super-potentials, see \cite{DNV}.  Concerning generalized notions of products of $(1,1)$-currents, such as the non-pluripolar product and the Andersson-Wulcan product, some comparison results were obtained in \cite{Viet_Lucas}. The general phenomenon is that, in some sense, these products are dominated by the corresponding density currents. Moreover, the ideas of the present paper were further developed in \cite{Viet-density-nonpluripolar} to prove that the Dinh-Sibony product of $(1,1)$-currents of full mass intersection in K\"ahler classes on a compact K\"ahler manifolds exists and is equal to their (relative) non-pluripolar product.

\vskip5pt

To end this introduction, let us outline the structure and the key points of the proof of our main theorem. Our main result will follow from a more general theorem showing that if a mixed product is well-defined in the sense that it is obtained via decreasing sequences of smooth p.s.h.\ functions, then the corresponding Dinh-Sibony product exists and coincides with the classical one. This is the content of Theorem \ref{the-cegrell-DS} below. Its proof is obtained by induction on the number of currents involved and uses the following key facts: the vanishing of Lelong numbers with respect to the currents obtained in previous steps (Lemma \ref{le-dkisuyraii}), the interpretation of Lelong numbers as the mass of dilated currents (Lemma \ref{lemma:lelong-density}) and the observation that the dilation procedure in the definition of density currents yields canonical regularizations via decreasing sequences of p.s.h.\ functions (cf.\ the proof of Lemma {\ref{lemma:top-degree}). A more detailed outline is given below, after the statement of Theorem \ref{the-cegrell-DS}.  Finally, in Theorem \ref{thm:MA=DS}, we prove our main theorem by verifying that functions in the  B\l ocki-Cegrell  class satisfy the assumptions of Theorem  \ref{the-cegrell-DS}.

\vskip7pt

\textbf{Acknowledgments.} We would like to thank Tien-Cuong Dinh and Hoang-Son Do for fruitful discussions. D. T. Huynh is grateful to the Academy of Mathematics and System Sciences in Beijing for its hospitality and financial support. He also wants to acknowledge partial support from the Core Research Program of Hue University, Grant No. NCM.DHH.2020.15. L. Kaufmann was supported by grant R--146--000--259--114 from the National University of Singapore. D.-V. Vu is supported by a postdoctoral fellowship of the Alexander von Humboldt Foundation. Finally, we thank the anonymous referee for the careful reading of the first version of the manuscript and for the comments that helped us clarify our presentation.

\section{Preliminaries on density currents} \label{sec:densities}

In this section, we recall the definition and basic properties of tangent and density currents. For details, the reader is refered to the original paper \cite{Dinh_Sibony_density} and to \cite{Viet_Lucas}, \cite{viet:imrn}, \cite{DNV}  for more material.

Let $X$ be a complex manifold of dimension $n$ and $V$ be a smooth complex submanifold of $X$ of dimension $\ell$.  Let $T$ be a  positive closed $(p,p)$-current on $X$ with $0 \le p \le n$.  

Let $NV$ be the normal bundle of $V$ in $X$ and denote by $\pi: NV \to V$ the canonical projection. We identify $V$ with the zero section of $NV$. Let $U$ be an open subset of $X$ with $U \cap V \neq \varnothing$. An \textbf{admissible map} on $U$ is a smooth diffeomorphism $\tau$ from $U$ to an open neighbourhood of $V\cap U$ in $NV$ such that $\tau$ is the identity map on $V\cap U$ and the restriction of its differential $\diff \tau$ to $NV|_{V \cap U}$ is the identity.  Using a Hermitian metric on $X$, we can always find an admissible map defined  on a small tubular neighbourhood of $V$, see \cite[Lemma 4.2]{Dinh_Sibony_density}. This map is not holomorphic in general. However, if one only works on a small open set of $X$, it is easy to obtain holomorphic admissible maps.

For $\lambda \in \C^*,$ let $A_\lambda: NV \to NV$ be the multiplication by $\lambda$ along the fibers of $NV$. Consider the family of currents $(A_\lambda)_* \tau_* T$ on $NV|_{V \cap U}$ parametrized by $\lambda \in \C^*$. Following \cite{Dinh_Sibony_density,Viet_Lucas,viet:imrn}, we have:

\begin{definition}  \label{def:tangent-current}   A \textbf{tangent current}  of $T$ along $V$ is a positive closed current  $R$ on $NV$ such that there exist a sequence $(\lambda_k)_{k \geq 1}$ in $ \C^*$ converging to $\infty$ and a collection of holomorphic admissible maps $\tau_j: U_j \to NV$, $j \in J$ whose domains cover $V$ such that
$$R = \lim_{k \to \infty} (A_{\lambda_k})_* (\tau_j)_* T$$
on $\pi^{-1}(U_j \cap V)$ for every $j \in J$.
\end{definition}


Before continuing, let us make some comments on Definition \ref{def:tangent-current}. In \cite{Dinh_Sibony_density}, the authors considered the situation where $X$ is K\"ahler and  $\supp T \cap V$ is compact. There, a tangent current to $T$ along $V$ is defined as a limit current of the family $(A_{\lambda})_* \tau_* T$ as $|\lambda| \to \infty$, where $\tau$ is a \emph{global} admissible map, defined on an open tubular neighborhood of $V$; see \cite[Definition 4.5]{Dinh_Sibony_density}. Then, they proceed to show, cf.  \cite[Proposition 4.4]{Dinh_Sibony_density}, that tangent currents are independent of the choice of global admissible maps and can be localized in the following sense: if $U$ is an open subset of $X$ and $S=  \lim_{k \to \infty} (A_{\lambda_k})_*\tau_* T$, then  for every \emph{local} admissible map $\tau': U \to NV$ we have  $S=  \lim_{k \to \infty} (A_{\lambda_k})_*\tau'_* T$ on $\pi^{-1}(U \cap V)$. Therefore, our definition of tangent currents is equivalent to that of  \cite{Dinh_Sibony_density} when $\supp T \cap V$ is compact and $X$ is K\"ahler. Note also that, in this situation, it is shown in \cite{Dinh_Sibony_density} that tangent currents always exist. 

However,  in the cases we consider here $\supp T \cap V$ is not necessarily compact. Therefore, it is unclear whether we can use the original definition of  \cite{Dinh_Sibony_density}. This is because using only global admissible maps it is hard to ensure that the family  $(A_{\lambda})_* \tau_* T$ is compact and that the limit currents are independent of $\tau$. That is why we adopt a more flexible definition using local holomorphic admissible maps.

\vskip5pt
 
As in the compact setting, tangent currents depend in general on the sequence $(\lambda_k)_{k \geq 1}$.  The existence of tangent currents in the local setting is a more delicate matter and we have to prove it in our particular situation. However, if such currents exist, they are still independent of the choice of admissible maps.

\begin{lemma}\cite[Proposition 2.5]{Viet_Lucas} \label{le_globaladmiss}  Let $\tau: U \to NV$ be a holomorphic admissible map. Assume that there is a sequence $(\lambda_k)_{k \geq 1}$ tending to $\infty$ such that $ (A_{\lambda_k})_* \tau_* T$ converges to some current $R$ on $\pi^{-1}(U \cap V)$.   Then, for any other admissible map $\tau': U' \to NV$, we have 
$$R= \lim_{k \to \infty} (A_{\lambda_k})_* \tau'_* T$$
on $\pi^{-1}(U \cap U' \cap  V).$ 
\end{lemma}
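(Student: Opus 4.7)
The plan is to reduce the independence of admissible maps to a Taylor expansion of the transition map $\sigma := \tau' \circ \tau^{-1}$, a smooth diffeomorphism between neighbourhoods of $V \cap U \cap U'$ inside $NV$. Since both $\tau$ and $\tau'$ are admissible, $\sigma$ fixes the zero section pointwise and its differential along $NV|_V$ is the identity. Working in fiberwise-linear local coordinates $(z', z'')$ on $NV$ coming from a local trivialization over $V \cap U \cap U'$—so that $V = \{z' = 0\}$ and $A_\lambda(z', z'') = (\lambda z', z'')$—these two properties force
$$\sigma(z', z'') = \bigl(z' + E_1(z', z''),\; z'' + E_2(z', z'')\bigr),$$
for smooth functions $E_1, E_2$ that vanish to order two in $(z', \bar z')$ along $V$.

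The key step is to show that the conjugates $\sigma_n := A_{\lambda_n} \circ \sigma \circ A_{\lambda_n}^{-1}$ converge to the identity in $C^\infty_{\loc}$. Explicitly,
$$\sigma_n(z', z'') = \bigl(z' + \lambda_n E_1(z'/\lambda_n, z''),\; z'' + E_2(z'/\lambda_n, z'')\bigr),$$
so the second-order vanishing of $E_j$ in the fiber variable gives the uniform bounds $\lambda_n E_1(z'/\lambda_n, z'') = O(|\lambda_n|^{-1})$ and $E_2(z'/\lambda_n, z'') = O(|\lambda_n|^{-2})$ on every fixed compact set. The same reasoning, applied to the partial derivatives of $E_j$ (each losing at most one order of vanishing in $z'$), controls every $C^k$-norm by $O(|\lambda_n|^{-1})$. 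One must also check that for any compact $K \subset NV|_{V \cap U \cap U'}$, the set $A_{\lambda_n}^{-1}(K)$ lies inside the domain of $\sigma$ once $|\lambda_n|$ is large enough, so that $\sigma_n$ is eventually well-defined on $K$. This step—the precise bookkeeping of the Taylor estimates uniformly in all $C^k$-norms—is where I expect the main obstacle, though it is ultimately a calculus exercise.

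Finally, using $\tau'_* T = \sigma_* \tau_* T$ and the intertwining relation $A_{\lambda_n} \circ \sigma = \sigma_n \circ A_{\lambda_n}$, one rewrites
$$(A_{\lambda_n})_* \tau'_* T = (\sigma_n)_* (A_{\lambda_n})_* \tau_* T.$$
Setting $S_n := (A_{\lambda_n})_* \tau_* T$, the conclusion then follows from the continuity statement that $(\sigma_n)_* S_n \to R$ whenever $S_n \to R$ weakly and $\sigma_n \to \id$ in $C^\infty_{\loc}$. Indeed, for a test form $\varphi$ compactly supported in $\pi^{-1}(U \cap U' \cap V)$, the decomposition
$$\bigl\langle (\sigma_n)_* S_n - R,\, \varphi \bigr\rangle = \bigl\langle S_n,\, \sigma_n^* \varphi - \varphi \bigr\rangle + \bigl\langle S_n - R,\, \varphi \bigr\rangle$$
reduces matters to the $C^\infty$-convergence $\sigma_n^* \varphi \to \varphi$ with supports in a common compact set, combined with the uniform boundedness of the masses of the positive currents $S_n$ on that compact set (a consequence of their weak convergence).
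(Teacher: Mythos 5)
Your argument is correct and is essentially the proof of the cited reference [KV19, Proposition 2.5] (the paper itself only quotes the lemma without proof): conjugate the transition map $\sigma=\tau'\circ\tau^{-1}$ by $A_{\lambda_n}$, show the conjugates tend to the identity in $C^1$ on compact subsets, and conclude via the locally uniform mass bound for the positive currents $S_n=(A_{\lambda_n})_*\tau_*T$. One small inaccuracy: admissibility forces $E_1$ to vanish to order two in $(z',\bar z')$, but $E_2$ only to order one, since the differential condition constrains only the normal component of $\diff\sigma$ along $V$; hence $E_2(z'/\lambda_n,z'')=O(|\lambda_n|^{-1})$ rather than $O(|\lambda_n|^{-2})$, which still suffices for the convergence $\sigma_n\to\id$.
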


 
A density current is a particular type of tangent current where $V$ is the diagonal inside a product space. More precisely, let $m \geq 1$ and let $T_j$ be positive closed $(p_j, p_j)$-currents for $1 \le j \le m$ on $X$. We usually assume that $p = p_1+\cdots+p_m \leq n$.  Let $\mathbf T = T_1 \otimes \cdots \otimes T_m$ be their tensor product. Then $\mathbf T$ is a positive closed $(p,p)$-current on $X^m.$ Let $\Delta = \{(x,\ldots,x): x \in X\} \subset X^m$ be the diagonal.  A \textbf{density current} associated with $T_1, \ldots,  T_m$ is a tangent current of $\mathbf T$ along $\Delta$. By definition, a tangent current is a positive closed $(p,p)$-current on the normal bundle $N\Delta$ of $\Delta$ inside $X^m$.

Let $\pi: N\Delta \to \Delta$ be the canonical projection. The following definition is given in \cite{Dinh_Sibony_density}.

\begin{definition} We say that the \textbf{Dinh-Sibony product} $T_1 \, \Curlywedge\,  \cdots \, \Curlywedge \, T_m$ of $T_1, \ldots, T_m$ exists if there is a unique density current $R$ associated with $T_1, \ldots, T_m$ and $R = \pi^* S$ for some current $S$ on $\Delta  = X$. In this case we define $$T_1 \, \Curlywedge\,  \cdots \, \Curlywedge \, T_m:= S.$$
\end{definition}

\section{Dinh-Sibony product and classical products} \label{sec-DSproduct}

Let $\Omega$ be a domain in $\C^n$. For a p.s.h.\ function $u$ on $\Omega$ and a point $x \in \Omega$, we denote by $\nu(u, x)$ the \textit{Lelong number} of $u$ at $x$.  See \cite{Demailly_ag} for various definitions and properties of the Lelong number.

The aim of this section is to prove the following general result. Our main theorem will be a consequence of it.

\begin{theorem} \label{the-cegrell-DS} Let $m \geq 2$ and $p \geq 0$ be such that $m-1+p \leq n$. Let $u_1, \ldots, u_{m-1}$ be p.s.h.\ functions on $\Omega$ and let $T$ be a positive closed $(p,p)$-current on $\Omega$.  Assume that for every subset $J=\{j_1, \ldots, j_{k}\} \subset  \{1, \ldots, m-1\}$, there is a current $R_J$ on $\Omega$ so that, for  any open set $U \subset \Omega$, for $j \in J$ and any sequence of smooth p.s.h.\ functions $(u^\ell_{j})_{\ell \in \N}$ decreasing to $u_j$ on $U$ as $\ell \to \infty$, one has 
\begin{equation} \label{eq:def-R_J}
\ddc u^\ell_{j_1} \wedge \cdots \wedge \ddc u^\ell_{j_{k}} \wedge T \longrightarrow R_{J} \quad \text{on } \, U \,\,\text{as } \,\ell \to \infty.
\end{equation}
We then define $\ddc u_{j_1} \wedge \cdots \wedge \ddc u_{j_{k}} \wedge T$ as the current $R_J$. If $J= \varnothing$, we set $R_{J}:= T$.

 


\vskip5pt

Then, the Dinh-Sibony product of  $\ddc u_1, \ldots, \ddc u_{m-1}, T$ is well-defined and one has 
\begin{equation} \label{eq:cegrell-DS}
\ddc u_1 \curlywedge \cdots \curlywedge \ddc u_{m-1} \curlywedge T = R_{\{1,\ldots, m-1\}} = \ddc u_1 \wedge \cdots \wedge \ddc u_{m-1} \wedge T .
\end{equation}

\end{theorem}

\vskip5pt

Before stating the preparatory results, let us briefly outline the structure of the proof. From the definition of density product we have to show that $R_{1,\lambda} \to \pi^* R_1$ as $\lambda \to \infty$, where $R_{1,\lambda}$ is the dilation of the tensor product of $T$ and $\ddc u_j$, $j=,\ldots, m-1$ along the diagonal. We will argue by induction on $m$. After fixing a suitable coordinate system on $(\C^n)^m = (\C^n,y^1) \times \cdots \times (\C^n,y^m)$ it is enough to work with two types of test forms $\Phi$. For each type, the estimates are of a different nature.
\begin{itemize}
\item  \textit{forms of type} $\rm I$: $\Phi = \phi_1(y^1) \wedge \phi_2(y^2) \wedge \ldots  \wedge \phi_m(y^m)$,
where $\phi_j$ are positive $(p_j,p_j)$-forms on $(\C^n,y^j)$ and at least one among $\phi_1,\ldots,\phi_{m-1}$ is not of top degree. In this case we have that $\langle   R_{1,\lambda}, \Phi \rangle \to 0$ as $\lambda \to \infty$, see Lemma \ref{lemma:not-top-deg}. Here we use Lemma \ref{le-dkisuyraii}, saying that the Lelong numbers of $u_j$ are negligible with respect to the currents obtained in previous steps, and a characterization of Lelong numbers in terms of dilated currents (Lemma \ref{lemma:lelong-density}). 

\medskip

\item  \textit{forms of type} $\rm II$: $\Phi = \phi_1(y^1) \wedge \phi_2(y^2) \wedge \ldots  \wedge \phi_m(y^m)$,
where $\phi_j$ is a radial $(n,n)$- form for every $j=1,\ldots, m-1$. In this case $\langle   R_{1,\lambda}, \Phi \rangle \to \langle R_1, \pi_*\Phi \rangle \quad \text{as } \, \,  \lambda \to \infty$, see Lemma \ref{lemma:top-degree}. Here we use that the sequence of dilations yields canonical regularizations via decreasing sequences of p.s.h.\ functions. 
\end{itemize}

\vskip5pt

Working with forms of type $\rm I$ allows us to prove that the limit currents have minimal horizontal dimension and, therefore, are the pullback of some current  $R^h_{1,\infty}$ in the diagonal, cf. Lemma \ref{lemma:minimal-h-dim}. On the other hand, working with forms of type $\rm II$  let us recognize the current $R^h_{1,\infty}$ as being $R_1$.

\vskip5pt

We now present the auxiliary lemmas we'll need. The proof of Theorem \ref{the-cegrell-DS} is given in the end of this section.

\begin{lemma} \label{le-dkisuyraii} Let the notations and the hypothesis be as in Theorem \ref{the-cegrell-DS}. Then  for every $J \subset \{1, \ldots, m-1\} $ and every $1\leq k \leq m-1$ such that $k \not \in J$ we have that $\nu(u_{k}, \cdot)= 0$ almost everywhere with respect to the trace measure of $R_{J}$.
\end{lemma}

\proof  We work locally. Let  $J \subset  \{1, \ldots, m-1\}$.  Let $(u^\ell_{j})_{\ell \in \N}$ be a sequence of smooth p.s.h.\ functions decreasing to $u_j$ as $\ell \to \infty$ for $j \in J$.  Let $k \in \{1, \ldots, m-1\} \backslash J$. 
Let $(u^\ell_k)_{\ell \in \N}$ be a sequence of locally bounded p.s.h.\ functions decreasing to $u_k$.  We claim that 
\begin{align}\label{conver-motsuyhai}
\ddc u^\ell_k \wedge R_J \to R_{J \cup \{k\}} \quad \text{as } \,\, \ell \to \infty.
\end{align}
Consider first the case where $u^\ell_k$ is smooth.  Let $\Phi$ be a test form with compact support and $\epsilon>0$ a constant. Using (\ref{eq:def-R_J})  and the fact that $u^{\ell}_k$ is smooth we can find, for each $\ell \geq 1$ an index $s_\ell$ satisfying  
\begin{align}\label{ine-haisuymot}
\big |\langle \ddc u^{\ell}_k \wedge R_J -\ddc u^{\ell}_k \wedge \bigwedge_{j \in J} \ddc u^{s_\ell}_j \wedge T, \Phi \rangle \big | = \big |\langle  R_J -  \bigwedge_{j \in J} \ddc u^{s_\ell}_j \wedge T, \ddc u^{\ell}_k  \wedge \Phi \rangle \big |  \le \epsilon.
\end{align}
We can choose $(s_\ell)_\ell$ so that it increases to $\infty$ as $\ell \to \infty$. Hence, $u^{s_\ell}_j$ decreases to $u_j$ for $j \in J$ and by  hypothesis, one obtains
$$\ddc u^{\ell}_k \wedge \wedge_{j \in J} \ddc u^{s_\ell}_j \wedge T \to R_{J \cup \{k\}}$$
as $s \to \infty$. It follows that 
$$\big |  \langle  \ddc u^{\ell}_k \wedge \bigwedge_{j \in J} \ddc u^{s_\ell}_j \wedge T - R_{J \cup \{k\}}, \Phi \rangle \big | \le \epsilon$$
for $\ell$ is big enough. 
Combining this with (\ref{ine-haisuymot}) gives  
$$\big | \langle \ddc u^{\ell}_k \wedge R_J - R_{J \cup \{k\}}, \Phi \rangle \big | \le 2\epsilon$$
for $\ell$ big enough.   Therefore,  (\ref{conver-motsuyhai}) follows if $u^\ell_k$ is smooth.

The case of general $u^{\ell}_k$ follows from a regularization argument.  Let $u^{\ell,\delta}_k$ be a standard smooth regularization of $u^{\ell}_k$ obtaining from convolution against a smoothing kernel, so that $u^{\ell,\delta}_k$  decreases to $u^{\ell}_k$ as $\delta \to 0$. For each $\ell$, let $\delta_\ell$ be small enough such that 
\begin{align}\label{ine-iiraiuellepsilon}    
\langle \ddc u^{\ell}_k \wedge R_J - \ddc u^{\ell, \delta_\ell}_k \wedge R_J, \Phi \rangle | \le \epsilon.
    \end{align}
We can choose $\delta_\ell$ to be decreasing in $\ell$. Hence, $u^{\ell, \delta_\ell}_k$ are smooth p.s.h.\ functions decreasing to $u_k$ as $\ell \to \infty$. By the first part of the proof, we see that $ \ddc u^{\ell, \delta_\ell}_k \wedge R_J$ converges to $R_{J \cup \{k\}}$.  This combined with (\ref{ine-iiraiuellepsilon}) gives $$\langle \ddc u^{\ell}_k \wedge R_J - R_{J\cup \{k\}}, \Phi \rangle | \le 2\epsilon$$
for $\ell$ big enough. Hence,  (\ref{conver-motsuyhai}) follows.

Recall that our goal is  to prove that $R_J$ has no mass on $\{\nu(u_k,\cdot)>0\}$.   Let $w(x)= \|x\|^2$, where $x$ is the standard coordinate system on $\C^n$. Let $N$ be a large constant and set 
$$u^\ell_k:= \log (e^{u_k}+ 1/\ell e^{N w}).$$

Then, the $u^\ell_k$ are locally bounded p.s.h.\ functions that decrease to $u$ as $\ell \to \infty$.  Suppose that $R_J$ has positive mass on $V:=\{\nu(u_k, \cdot)>0\}$, that is $\mathbf 1_V R_J \neq 0$. Notice that $u_k= -\infty$ on $V$ implies that $u^\ell_k = N w - \log \ell$ on $V$. It follows that 
$$\ddc u^\ell_k \wedge R_J \ge \ddc u^\ell_k \wedge (\bold{1}_V R_J)  = \ddc (u^\ell_k \bold{1}_V R_J)= N \ddc  w \wedge (\bold{1}_V R_J).$$

Let $K$ a fixed compact set that is charged by $\bold{1}_V R_J$. Then, the mass of $N \ddc  w \wedge (\bold{1}_V R_J)$ over $K$ equals $c N$ for some constant $c > 0$ independent of $\ell$. By  the above inequality and (\ref{conver-motsuyhai}) one gets that the mass of $R_{J \cup \{k\}}$ on $K$ is $ \geq c N$. Choosing $N$ large enough gives a contradiction. This finishes the proof.
\endproof

Let $(x^1, \ldots,x^m)$ be the canonical coordinate system in $\Omega^m$ and $\Delta$ be the diagonal of $\Omega^m$. Put $y^j:= x^j- x^m$ for $1 \le j \le m-1$ and $y^m:= x^m$. Then, $(y^1,\ldots, y^{m-1}, y^m)$ forms a new coordinate system on $\Omega^m$     and $\Delta= \{y^j= 0: 1 \le j \le m-1\}$ which is identified with $\Omega$. Using these coordinates, we identify naturally the normal bundle of $\Delta$ with the trivial bundle $\pi: (\C^n)^{m-1} \times \Omega \to \Omega$. Observe that the change of coordinates $\varrho: \Omega^m \to (\C^n)^{m-1} \times \Omega$ given by
\begin{equation}
\label{eq:standard-tau}
\varrho(x^1, \ldots, x^m) = (x^1-x^m, \ldots, x^{m-1} -x^m, x^m) :=(y^1, \ldots, y^m) := (y',y^m):= y
\end{equation}
 is a holomorphic admissible map. By Lemma \ref{le_globaladmiss}, it will be enough to work only with $\varrho$.

For $1 \le  j \le m-1,$ let  $T_j:=\ddc u_j$, $\widetilde T := \pi^* T $ and
$$\widetilde u_j(y',y^m) := \varrho_* u_j (y',y^m) = u_j(y^j + y^m).$$
We can check that $\widetilde{u}_j$ is locally integrable with respect to   $\ddc \widetilde u_{j+1} \wedge \cdots \wedge \ddc \widetilde u_{m-1} \wedge \widetilde T$ for $j=m-1, \ldots, 1$ and for every sequence $(u_{j}^\ell)_{\ell \in \N}$ of smooth p.s.h.\ functions decreasing to $u_j$ and $\widetilde u_{j}^\ell:= \varrho_* u_{j}^\ell$, we have 
\begin{align} \label{converge-ujl-rhopush}
\ddc \widetilde  u_{1}^\ell \wedge \cdots \wedge \ddc \widetilde  u_{m-1}^\ell \wedge \widetilde T \to  \ddc \widetilde u_1 \wedge \cdots \wedge \ddc \widetilde u_{m-1} \wedge \widetilde T
\end{align}
as $\ell \to \infty$. For the meaning of the right-hand side, see Definition \ref{def-bedford-taylor} below. The above assertions follow from a reasoning similar to the one in \cite[Lemma 2.3]{Viet_Lucas}.  Consequently, we get 
 \begin{equation} \label{eq-pushbyvarrhoTjtensor}
\varrho_* (T_1 \otimes \cdots \otimes T_{m-1}\otimes T) = \ddc \widetilde u_1 \wedge \cdots \wedge \ddc \widetilde u_{m-1} \wedge \widetilde T. 
\end{equation}

Now, for $1\le j\le m-1$,  put
\begin{equation*} \label{eq:def-Rjlambda}
R_{j,\lambda}:= (A_\lambda)_*\varrho_* (T_j \otimes \cdots \otimes T_{m-1}\otimes T) = (A_\lambda)_* \big( \ddc \widetilde u_j \wedge \cdots \wedge \ddc \widetilde u_{m-1} \wedge \widetilde T \big)
\end{equation*}
 and for $J \subset \{1,\ldots, m-1\}$, put
\begin{equation} \label{eq:def-RJlambda}
R_{J,\lambda}:= (A_\lambda)_* \bigg(  \bigwedge_{j \in J} \ddc \widetilde u_j  \wedge \widetilde T \bigg).
\end{equation}

Define also $$R_j := R_{\{j,\ldots,m-1\}} = \ddc  u_j \wedge \cdots \wedge \ddc  u_{m-1} \wedge  T.$$
We need to show that
\begin{equation*}
R_{1,\lambda} \,\, \stackrel{|\lambda| \to \infty}{\longrightarrow} \pi^* R_1. 
\end{equation*}
 We will do that by testing $R_{1,\lambda}$ against forms of different types.
 
 For future use, we note that
 \begin{equation} \label{eq:R1-lambda}
R_{1,\lambda} = \ddc u_{1}(\lambda^{-1}y^1 + y^m) \wedge \ldots \wedge  \ddc u_{m-1}(\lambda^{-1}y^{m-1} + y^m) \wedge T(y^m). 
\end{equation}
This is clear when the $u_j$ are smooth and the general case follows by regularizing the $u_j$ and using (\ref{converge-ujl-rhopush}).


In the definition below and throughout this paper, $i^n \diff y^j \wedge \diff \overline{y^j}$ will be a shorthand notation for the standard volume form on $(\C^n,y^j)$, that is $$i^n \diff y^j \wedge \diff \overline{y^j} := \Big( \sum_{k=1}^n i \diff y_k^j \wedge \diff \overline{y_k^j} \Big)^n.$$

\begin{definition} \label{def:split-forms}
Let $\Phi$ be a differential form on $(\C^n,y^1) \times (\C^n,y^2) \times \cdots \times (\C^n,y^m)$.  We say that $\Phi$ is a \textbf{positive split form} if it can be written as
\begin{equation*}
 \Phi = \phi_1(y^1) \wedge \phi_2(y^2) \wedge \ldots  \wedge \phi_m(y^m),
\end{equation*}
where $\phi_j$ are positive $(p_j,p_j)$-forms on $(\C^n,y^j)$.

An $(n,n)$-form $\phi_j$ on $(\C^n,y^j)$ is \textbf{radial} if it's rotation invariant, namely, if it is of the form $$\phi_j(y^j) = \chi(\|y^j\|^2) \cdot i^{n} \diff y^j \wedge \diff \overline{y^j}$$
 for some smooth function $\chi$.
\end{definition}

 In the sequel we will need the following expression of the Lelong number in terms of tangent currents. We denote by $\beta$ the standard K\"ahler form of $\C^n$ and by $B_\rho(0)$ the open ball of radius $\rho$ center at the origin in $\C^n$.

\begin{lemma} \label{lemma:lelong-density}
Let $S$ be a positive closed $(p,p)$-current defined near the origin in  $\C^n$. Let $A_\lambda(z) =  \lambda z$ and  set $S_\lambda:= (A_\lambda)_*S$. Let $\lambda_k$ be an increasing sequence tending to $\infty$ such that $S_{\lambda_k}$ converges to $S_\infty$. Let $\sigma_{S_\infty} = S_\infty \wedge \frac{1}{(n-p)!} \beta^{n-p}$ be the trace measure of $S_\infty$ and $\nu(S;0)$ be the Lelong number of $S$ at the origin. Then, there is a constant $c_p> 0$ depending only on $p$ such that  $\nu(S;0) = \lim_{\lambda_k \to \infty} c_p  \sigma_{S_{\lambda_k}}(B_1(0)) = c_p \,  \sigma_{S_\infty}(B_1(0))$ and the limit is decreasing.
\end{lemma}

\begin{proof}
To simplify the notation we may assume that the limit of $(A_{\lambda})_* S$ as $\lambda$ tends to infinity exists and is equal to $S_\infty$. Set $c_p:= \frac{(n-p)!}{\pi^{n-p}}$. Then, by the definition of Lelong number \cite{Demailly_ag} we have
\begin{align*}
\nu(S;0) &= \lim_{r \to 0} \frac{1}{ \pi^{n-p} \, r^{2n-2p}}  \int_{B_r(0)} S \wedge \beta^{n-p} = \lim_{|\lambda| \to \infty} \frac{1}{\pi^{n-p}} \, |\lambda|^{2n-2p} \int_{B_{1 \slash |\lambda|}(0)} S \wedge \beta^{n-p} \\ &= \lim_{|\lambda| \to \infty}  \frac{1}{\pi^{n-p}}  \int_{B_{1 \slash |\lambda|}(0)} S \wedge (A_\lambda)^*\beta^{n-p} = \lim_{|\lambda| \to \infty}  \frac{1}{\pi^{n-p}} \int_{B_{1 \slash |\lambda|}(0)} (A_\lambda)^* [ (A_\lambda)_*S \wedge \beta^{n-p}] \\  &= \lim_{|\lambda| \to \infty}  \frac{1}{\pi^{n-p}}  \int_{B_{1}(0)} (A_\lambda)_*S \wedge \beta^{n-p} \geq \frac{1}{\pi^{n-p}} \int_{B_{1}(0)} S_\infty \wedge \beta^{n-p} = c_p \,  \sigma_{S_\infty}(B_1(0)).
\end{align*}
In the second equality we have used that $A_\lambda^* \beta = |\lambda|^2 \,\beta$ and in the last inequality we have used the fact that if a sequence of measures $m_\lambda$ converges to $m$ and $U$ is open, then $\liminf_\lambda m_\lambda(U) \geq m(U)$. Hence 
\begin{equation} \label{eq:lelong-geq-T-infty}
\nu(S;0) \geq  c_p \,  \sigma_{S_\infty}(B_1(0)).
\end{equation}

Repeating the above argument on closed balls and using the fact that if a sequence of measures $m_\lambda$ converges to $m$  then $\limsup_\lambda m_\lambda(K) \leq m(K)$ for all closed sets $K$, we get that $\nu(S;0) \leq  c_p \,  \sigma_{S_\infty}(\overline{B_1(0)}).$ Now, the current $S_\infty$ is invariant by $(A_t)_*$ for every $t \in \C^*$ (see \cite{Dinh_Sibony_density}), hence its mass is homogeneous, namely $\sigma_{S_\infty}(B_\rho(0)) = \rho^{2n-2p} \sigma_{S_\infty}(B_1(0))$ for every $\rho > 0$ and similarly for the closed ball.  For $0 < \rho < 1$ this gives $$\nu(S;0) \leq c_p \,  \sigma_{S_\infty}(\overline{B_1(0)}) = c_p \, \rho^{2p-2n} \sigma_{S_\infty}(\overline{B_\rho(0)}) \leq  c_p \, \rho^{2p-2n} \sigma_{S_\infty}(B_1(0)).$$

Letting $\rho \nearrow 1$ gives $\nu(S;0) \leq  c_p \sigma_{S_\infty}(B_1(0))$. Together with (\ref{eq:lelong-geq-T-infty}) this gives the desired result.

It is a standard fact that all the above limits are decreasing as $r \to 0$, or equivalently as $\lambda \to \infty$, see \cite[III.5]{Demailly_ag}.
\end{proof}

Theorem \ref{the-cegrell-DS} will be proved by induction on $m$. The induction step will make use of next lemma. Let $u_1,\ldots,u_{m-1}$ and $T$ be as in Theorem  \ref{the-cegrell-DS}. For $J \subset \{1,\ldots,m-1\}$, let $R_{J,\lambda}$ be the current defined in (\ref{eq:def-RJlambda}) and $R_J = \bigwedge_{j \in J} \ddc u_j \wedge T$, defined as in (i) of Theorem  \ref{the-cegrell-DS}.

\begin{lemma} \label{lemma:not-top-deg}
With the above notation and the hypothesis of Theorem \ref{the-cegrell-DS}, assume that $R_{J,\lambda} \to \pi^* R_J$ as $\lambda \to \infty$ for every $J \subset \{1,\ldots,m-1\}$ such that $|J| \leq m -2$.

Let $\Phi$ be a positive split test form with compact support on  $(\C^n,y^1) \times \cdots \times (\C^n,y^m)$. Assume that $\Phi$ is not of bi-degree $(n,n)$ on $y^k$ for some $ 1 \leq k \leq m-1$. Then  $\langle   R_{1,\lambda}, \Phi \rangle \to 0$ as $\lambda \to \infty$.
\end{lemma}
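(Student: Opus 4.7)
The plan is to reduce to $k=1$ by symmetry, apply integration by parts to shift the $\ddc$ from the factor $\ddc u_1^\lambda$ onto the smooth test form $\Phi$, pass to the limit $\lambda\to\infty$ using the assumed convergence of the shorter product, and then observe that the resulting fibre integral vanishes because $\phi_1$ is not of top bidegree in $y^1$.

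Concretely, I would set $S_\lambda:=R_{\{2,\ldots,m-1\},\lambda}=\bigwedge_{j=2}^{m-1}\ddc u_j^\lambda\wedge\widetilde T$ and $S:=\ddc u_2\wedge\cdots\wedge\ddc u_{m-1}\wedge T$, writing $u_j^\lambda(y^j,y^m):=u_j(\lambda^{-1}y^j+y^m)$, so that $R_{1,\lambda}=\ddc u_1^\lambda\wedge S_\lambda$. Since $S_\lambda$ is positive and closed of pure bidegree it is both $d$- and $d^c$-closed, hence $\ddc(S_\lambda\wedge\Phi)=S_\lambda\wedge\ddc\Phi$. Applying Stokes first for smooth $u_1$, and extending to the general p.s.h.\ case through a decreasing regularization $u_1^{(\varepsilon)}\downarrow u_1$ together with the continuity of Monge-Amp\`ere products granted by hypothesis (i) of Theorem~\ref{the-cegrell-DS} and monotone convergence on the right-hand side, I would obtain
\[
\langle R_{1,\lambda},\Phi\rangle=\langle u_1^\lambda\cdot S_\lambda,\ddc\Phi\rangle.
\]

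Next I would pass to the limit $\lambda\to\infty$. The hypothesis of the present lemma applied to $J=\{2,\ldots,m-1\}$ (which has cardinality $m-2$) gives $S_\lambda\to\pi^*S$, while the locally uniform convergence of $(y^1,y^m)\mapsto\lambda^{-1}y^1+y^m$ to the projection onto $y^m$ yields $u_1^\lambda\to\pi^*u_1$ in $L^1_\loc$. Combining these with the Lelong-number vanishing (ii) of Theorem~\ref{the-cegrell-DS} to control the nonlinear product, I expect
\[
\langle u_1^\lambda\cdot S_\lambda,\ddc\Phi\rangle\longrightarrow\langle\pi^*(u_1\cdot S),\ddc\Phi\rangle.
\]
Since $\pi_*$ commutes with $\ddc$ on smooth compactly supported forms, this equals $\langle u_1\cdot S,\ddc\pi_*\Phi\rangle$. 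But the fibre integral factors as $\pi_*\Phi(y^m)=\phi_m(y^m)\prod_{j=1}^{m-1}\int_{\C^n}\phi_j(y^j)$, and $\int_{\C^n}\phi_1=0$ because a positive form on $\C^n$ integrates to zero unless its bidegree is $(n,n)$. Hence $\pi_*\Phi\equiv 0$ and the limit is zero.

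The main obstacle will be the passage to the limit in the nonlinear product $u_1^\lambda\cdot S_\lambda$: since $u_1$ is only in $L^1_\loc$ and may equal $-\infty$ on a pluripolar set, uniform control of this product as $\lambda\to\infty$ is delicate. This is precisely where the Lelong-number vanishing hypothesis (ii) is essential, in combination with the weak convergence of $S_\lambda$ supplied by the inductive assumption of the lemma.
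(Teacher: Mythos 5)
Your proposal takes a genuinely different route from the paper's, but there is a real gap at the central step, and it is precisely the one you yourself flag.

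The paper does not move the $\ddc$ onto $\Phi$ via Stokes. Instead it works directly with $R_{1,\lambda}\wedge\Phi$, splits the set $\{1,\ldots,m-1\}$ into indices $J$ where $\phi_j$ is $(n,n)$ and indices $K$ where it is $(n-1,n-1)$, and uses the bidegree count to replace $\ddc u_k(\lambda^{-1}y^k+y^m)$ by $\ddc_{y^k}u_k(\lambda^{-1}y^k+y^m)$ for $k\in K$ and by $\ddc_{y^m}$ for $j\in J$. Each fibrewise integral $\int_{y^k}\ddc_{y^k}u_k(\lambda^{-1}y^k+y^m)\wedge\phi_k$ is then bounded by a quantity that, as $\lambda\to\infty$, decreases to a constant times the Lelong number $\nu(u_k,y^m)$, and a $\limsup$ estimate (via Lemma \ref{lemma:lim-sup-usc}, which relies on positivity) shows $\limsup_\lambda|\langle R_{1,\lambda},\Phi\rangle|\lesssim\int\prod_{k\in K}\nu(u_k,\cdot)\,R_J\wedge\phi_m$, which vanishes by hypothesis (ii). Positivity is used throughout, and only a $\limsup$ is ever taken.

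Your approach, by contrast, transfers everything to the base by integration by parts and then tries to pass to the limit in the product $u_1^\lambda\cdot S_\lambda$. This step is not justified and is, in general, false: both $u_1^\lambda$ and $S_\lambda$ converge only weakly, $u_1^\lambda(y^1,y^m)=u_1(\lambda^{-1}y^1+y^m)$ is \emph{not} a decreasing family in $\lambda$ (so the standard Bedford--Taylor monotone continuity does not apply), and the test form $\ddc\Phi$ is signed and not split, so you cannot leverage positivity the way the paper does via Lemma \ref{lemma:lim-sup-usc}. In fact, if the convergence $u_1^\lambda\cdot S_\lambda\to\pi^*(u_1\cdot S)$ held unconditionally, your argument would prove the lemma without using hypothesis (ii) at all, which cannot be correct: when the Lelong numbers do not vanish (e.g.\ $u_1$ with an analytic singularity), the tangent current genuinely has a nonzero vertical component and $\langle R_{1,\lambda},\Phi\rangle$ does not tend to zero for non-top-degree $\phi_1$. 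So the Lelong-number hypothesis must enter the limit step in a quantitative way, and your sketch does not supply that mechanism. The endgame of your argument (that $\pi_*\Phi=0$ whenever some $\phi_j$, $j\le m-1$, is not $(n,n)$, hence the base-side pairing vanishes) is correct, but the claimed limit that gets you to that point is the missing content. To make your route rigorous you would essentially need to reproduce the paper's Lelong-number estimate at the level of $u_1^\lambda\cdot S_\lambda$, at which point you would have lost the advantage of the shortcut.

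Two smaller remarks. The reduction ``to $k=1$ by symmetry'' is harmless but not needed once you observe that $\pi_*\Phi=0$ as soon as any single $\phi_j$ fails to be $(n,n)$. Also, the parenthetical ``a positive form on $\C^n$ integrates to zero unless its bidegree is $(n,n)$'' should be phrased as ``the fibre integral of a form of less than top fibre degree vanishes''; as written it does not parse, since a $(p,p)$-form with $p<n$ cannot be integrated over $\C^n$ at all.
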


\begin{proof}
By assumption
\begin{equation*}
 \Phi = \phi_1(y^1) \wedge \ldots \wedge \phi_{m-1}(y^{m-1})  \wedge \phi_m(y^m),
\end{equation*}
where each $\phi_j$ is positive and compactly supported.   Observe that we only need to consider forms $\Phi$ such that $R_{1,\lambda} \wedge \Phi$ has full degree, otherwise the last product vanishes and the result is trivial.

Notice that the current $R_{1,\lambda}$ has only terms of degree $0,1$ or $2$ on each $y^j$, $j=1,\ldots,m-1$.  Therefore, it suffices to consider the case where $\phi_j$ has bi-degree $(n-1,n-1)$ or $(n,n)$ for every $ j=1,\ldots,m-1$. Set $$J = \big\{j \in \{1,\ldots,m-1\} : \phi_j \,\, \text{ has bi-degree } \, (n,n) \big\}$$ and $$ K = \big\{k \in \{1,\ldots,m-1\} : \phi_k \,\, \text{ has bi-degree } \, (n-1,n-1) \big\}.$$

It follows from the assumption on $\Phi$ that $K$ is non-empty and $|J| \leq m-2$. Hence, by hypothesis
\begin{equation} \label{eq:RJ-lambda-conv}
\lim_{|\lambda| \to \infty} R_{J,\lambda} = \pi^* R_J.
\end{equation}

 Set $\phi_J = \bigwedge_{j \in J} \phi_j$ and $\phi_K = \bigwedge_{k \in K} \phi_k$. Since $J \cup K = \{1,\ldots,m-1\}$, we have that $$\Phi = \phi_J \wedge \phi_K \wedge \phi_m.$$

It follows from (\ref{eq:R1-lambda}) that
\begin{equation*}
R_{1,\lambda} \wedge \Phi = \bigwedge_{k \in K} \big( \ddc  u_k(\lambda^{-1} y^k + y^m) \big) \wedge \phi_K \wedge R_{J, \lambda} \wedge \phi_J \wedge \phi_m.
\end{equation*}

Since $R_{1,\lambda} \wedge \Phi$ is a current of top degree in  $(\C^n,y^1) \times \cdots \times (\C^n,y^m)$, it must have bi-degree $(n,n)$ on each $y^j$ (otherwise $R_{1,\lambda} \wedge \Phi =0$ and the lemma is trivial).  Hence, for  $j \in J$ only the derivatives of $u_j$ with respect to $y^m$ will contribute, while for $k \in K$, only the derivatives of $u_k$ with respect to $y^k$ will contribute. This gives 
\begin{equation} \label{eq:R1lambda-Phi}
R_{1,\lambda} \wedge \Phi = \bigwedge_{k \in K} \big( \ddc_{y^k} u_k(\lambda^{-1} y^k + y^m) \big) \wedge \phi_K \wedge R_{J, \lambda} \wedge \phi_J \wedge \phi_m.
\end{equation}

Here, the symbol $\ddc_{y^k}$  means that we only consider the (weak) derivatives with respect to the $y^k$ variables. The fact that the above wedge product is well-defined is obvious when the $u_j$ are smooth. This is less obvious for non-smooth functions, but it can be justified as in \cite[Lemma 2.3]{Viet_Lucas}. 
Now, for fixed $y^m$ and $k \in K$ we have that
\begin{equation*}
\Big|  \int_{y^k} \ddc_{y^k} u_k(\lambda^{-1}y^k + y^m) \wedge \phi_k(y^k) \Big|  \leq c_ k\int_{B_k} \ddc_{y^k} u_k(\lambda^{-1}y^k + y^m) \wedge \beta^{n-1}(y^k),
\end{equation*}
where $c_k > 0$ is a constant independent of $y^m$, $\beta$ is the standard K\"ahler form on $(\C^n,y^k)$ and $B_k$ is a ball in $(\C^n,y^k)$ containing the support of $\phi_k$. By Lemma \ref{lemma:lelong-density}, the integral on the right-hand side of the above inequality decreases to a constant independent of $y^m$ times the Lelong number of the $(1,1)$-current $\ddc_{y^k} u_k(y^k + y^m)$ at $y^k=0$, which is equal to $\nu(u_k,y^m)$. Here we use that the Lelong number of a positive closed $(1,1)$-current coincides with the Lelong number of any of its local potential (cf. \cite[III.6.9]{Demailly_ag}). Hence, for every $y^m$ one has
\begin{equation} \label{eq:limsup-lelong}
\limsup_{|\lambda| \to \infty} \Big|  \int_{y^k} \ddc_{y^k} u_k(\lambda^{-1}y^k + y^m) \wedge \phi_k(y^k) \Big| \lesssim \nu(u_k,y^m).
\end{equation}
Combining this with (\ref{eq:R1lambda-Phi}), the hypothesis that $R_{J, \lambda} \to \pi^* R_J$ as $\lambda \to \infty$ and Lemma \ref{lemma:lim-sup-usc} below, one obtains
\begin{equation*} \label{eq:R1lambda-Phi-2}
\limsup_{|\lambda| \to \infty} | \langle R_{1,\lambda} \wedge \Phi \rangle |  \lesssim  \int_{y^m} \bigg( \prod_{k \in K} \nu(u_k,y^m) \bigg)  R_J \wedge \phi_m.
\end{equation*}
The last integral in the above inequality vanishes because, by Lemma \ref{le-dkisuyraii}, $\nu(u_k,\cdot) = 0$ almost everywhere with respect to $R_J$. Therefore $$\limsup_{|\lambda| \to \infty} | \langle R_{1,\lambda} \wedge \Phi \rangle | = 0,$$ concluding the proof of the Lemma.
\end{proof}

We have used the following well known result.
\begin{lemma} \label{lemma:lim-sup-usc}
Let $X$ be a locally compact Hausdorff space. Let $m_\lambda$ be a sequence of Radon measures on $X$ whose supports are contained in a fixed compact subset of $X$. Assume  that  $m_\lambda \to m$ as $\lambda \to \infty$. Then for any sequence $(f_\lambda)_\lambda$ of continuous functions decreasing pointwise to a function $f$, we have that $$\limsup_{\lambda \to \infty} \int_X f_\lambda \, \diff m_\lambda \leq \int_X f \, \diff m.$$
\end{lemma}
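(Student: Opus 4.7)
The plan is to use the pointwise monotonicity of $(f_\lambda)$ to dominate $f_\lambda$ by a \emph{fixed} continuous function $f_n$, then invoke the weak convergence $m_\lambda \to m$ on that fixed function, and finally let $n \to \infty$ using monotone convergence. Let $K \subset X$ be a fixed compact set containing $\supp m_\lambda$ for every $\lambda$ (and hence $\supp m$).

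First, I would fix an index $n$ and note that, since $(f_\lambda)$ is decreasing in $\lambda$, we have $f_\lambda \leq f_n$ pointwise for every $\lambda \geq n$. Integrating against $m_\lambda$ gives
\begin{equation*}
\int_X f_\lambda \, \diff m_\lambda \leq \int_X f_n \, \diff m_\lambda.
\end{equation*}
To pass to the limit in $\lambda$ on the right-hand side via weak convergence, I would multiply by a cutoff: choose a continuous compactly supported $\chi : X \to [0,1]$ with $\chi \equiv 1$ on a neighbourhood of $K$. Then $\chi f_n$ is continuous and compactly supported, and because all measures involved are concentrated on $K$,
\begin{equation*}
\int_X f_n \, \diff m_\lambda = \int_X \chi f_n \, \diff m_\lambda \,\,\stackrel{\lambda \to \infty}{\longrightarrow}\,\, \int_X \chi f_n \, \diff m = \int_X f_n \, \diff m.
\end{equation*}
Taking $\limsup$ in $\lambda$ on both sides of the previous inequality then yields $\limsup_{\lambda \to \infty} \int_X f_\lambda \, \diff m_\lambda \leq \int_X f_n \, \diff m$.

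The final step is to let $n \to \infty$. The limit $f$ is upper semicontinuous as a decreasing limit of continuous functions, hence Borel measurable; and since $f_1$ is bounded on $K$ while $m(K) < \infty$, the sequence $(f_1 - f_n)$ increases to $(f_1 - f) \in [0,\infty]$, so the monotone convergence theorem gives $\int_X f_n \, \diff m \to \int_X f \, \diff m$, which concludes the argument. The only delicate point is the weak-convergence step, which forces the introduction of the cutoff $\chi$ since $f_n$ need not have compact support; this manoeuvre is harmless precisely because the $m_\lambda$ all live on the common compact set $K$.
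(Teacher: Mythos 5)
Your proof is correct. The paper does not supply its own proof of Lemma \ref{lemma:lim-sup-usc} (it is labelled ``well known''), so there is nothing to compare against, but the argument you give is the standard one: dominate $f_\lambda$ by a frozen $f_n$ for $\lambda \geq n$, pass to the limit in $\lambda$ via weak convergence (using the Urysohn cutoff $\chi$ so that $\chi f_n$ is a legitimate test function, which is needed since $f_n$ need not have compact support), and then let $n \to \infty$ via monotone convergence applied to $f_1 - f_n \nearrow f_1 - f \geq 0$, using $m(K) < \infty$ and the boundedness of $f_1$ on $K$ to make $f_1$ integrable. You also correctly observe that the hypothesis $\supp m_\lambda \subset K$ forces $\supp m \subset K$. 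The only implicit assumption worth flagging is that the $m_\lambda$ are \emph{positive} Radon measures, which is needed for the inequality $\int f_\lambda\,\diff m_\lambda \leq \int f_n\,\diff m_\lambda$; this is consistent with how the lemma is used in the paper (the $m_\lambda$ are trace measures of positive currents) and with the usual convention, but stating it explicitly would be cleaner.
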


\begin{lemma} \label{lemma:top-degree}
Under the assumptions of Theorem \ref{the-cegrell-DS}, let $\Phi = \phi_1(y^1) \wedge \ldots \wedge \phi_{m-1}(y^{m-1})  \wedge \phi_m(y^m)$ be a positive split test form with compact support on $(\C^n,y^1) \times \cdots \times (\C^n,y^m)$. Assume that  $\phi_j$ is a radial $(n,n)$- form for every $j=1,\ldots, m-1$. Then $$\langle   R_{1,\lambda}, \Phi \rangle \to \langle R_1, \pi_*\Phi \rangle \quad \text{as } \, \,  \lambda \to \infty.$$
\end{lemma}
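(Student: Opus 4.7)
The plan is to reduce $\langle R_{1,\lambda}, \Phi\rangle$ to a pairing involving the classical convolution regularisations of the $u_j$'s, and then invoke hypothesis~(i) of Theorem~\ref{the-cegrell-DS}. The key observation is that for each $j<m$, the form $\phi_j(y^j) = \chi_j(\|y^j\|^2)\, i^n \diff y^j \wedge \diff \overline{y^j}$ has top bi-degree $(n,n)$ in $y^j$. Hence, in the wedge from (\ref{eq:R1-lambda}), any component of $\ddc u_j(\lambda^{-1}y^j + y^m)$ involving $\diff y^j$ or $\diff \overline{y^j}$ vanishes against $\phi_j$, leaving only the pure $y^m$-part $\ddc_{y^m} u_j(\lambda^{-1}y^j + y^m)$ of each factor. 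Integrating out the variables $y^1,\ldots,y^{m-1}$ via Fubini, one should obtain
\[
\langle R_{1,\lambda},\Phi\rangle \;=\; \int \bigwedge_{j=1}^{m-1} \ddc u_j^{[\lambda]}(y^m) \wedge T(y^m) \wedge \phi_m(y^m),
\]
where $u_j^{[\lambda]}(y^m) := \int u_j(\lambda^{-1}y^j + y^m)\,\phi_j(y^j)$. This identity is immediate when the $u_j$ are smooth, and the general case follows by approximating $u_j$ with Demailly's smoothing $u_j^\ell \downarrow u_j$ and passing to the limit on both sides, using the Bedford-Taylor-type continuity invoked in (\ref{converge-ujl-rhopush}).

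After the substitution $z := \lambda^{-1} y^j$, one has $u_j^{[\lambda]} = u_j * \eta_{j,\lambda}$, where $\eta_{j,\lambda}(z) := |\lambda|^{2n}\chi_j(|\lambda|^2\|z\|^2)$ is a smooth, non-negative, radial, compactly supported function of total mass $c_j := \int \phi_j$. Setting $\epsilon := 1/\lambda$, this is the standard radial regularisation of $u_j$ at scale $\epsilon$; by Demailly's classical result, $c_j^{-1} u_j^{[\lambda]}$ is a smooth p.s.h.\ function decreasing pointwise to $u_j$ as $\lambda \to \infty$ (the case when some $c_j = 0$ is trivial, since then $\Phi \equiv 0$). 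Thus, for any sequence $\lambda_\ell \to \infty$, the sequences $(c_j^{-1} u_j^{[\lambda_\ell]})_\ell$ are smooth p.s.h.\ and decrease to $u_j$, so hypothesis~(i) of Theorem~\ref{the-cegrell-DS} with $J = \{1,\ldots,m-1\}$ gives
\[
\bigwedge_{j=1}^{m-1} \ddc\bigl(c_j^{-1} u_j^{[\lambda_\ell]}\bigr) \wedge T \;\longrightarrow\; R_{\{1,\ldots,m-1\}} = R_1.
\]
Pairing with $\phi_m$ and multiplying by $c_1\cdots c_{m-1}$ gives $\langle R_{1,\lambda_\ell},\Phi\rangle \to c_1\cdots c_{m-1}\langle R_1,\phi_m\rangle = \langle R_1,\pi_*\Phi\rangle$, the last equality since $\pi$ projects onto the $y^m$-factor, so that $\pi_*\Phi = c_1\cdots c_{m-1}\phi_m$.

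The main technical hurdle is the first-step identity: commuting the Fubini integration in $y^1,\ldots,y^{m-1}$ with the Bedford-Taylor wedge of the $\ddc u_j(\lambda^{-1}y^j + y^m)$'s when the $u_j$ are only in the B\l ocki-Cegrell class. Once this is handled via smoothing, the remainder is a routine application of Demailly's radial regularisation and hypothesis~(i).
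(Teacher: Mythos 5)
Your proposal is correct and follows essentially the same approach as the paper: exploit the maximal bi-degree of each $\phi_j$ to reduce to the $\ddc_{y^m}$-part, apply Fubini to identify $\langle R_{1,\lambda},\Phi\rangle$ with $\langle \ddc u_1^\lambda\wedge\cdots\wedge\ddc u_{m-1}^\lambda\wedge T,\phi_m\rangle$ where $u_j^\lambda$ is the radial regularisation of $u_j$ at scale $|\lambda|^{-1}$, and then invoke hypothesis~(i). The only cosmetic difference is that you carry the normalisation constants $c_j$ explicitly rather than normalising $\int\phi_j=1$ at the outset.
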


\begin{proof}
After multiplying $\Phi$ by a positive constant, we can assume that $\int_{(\C^n,y^j)} \phi_j =1$ for every $j=1,\ldots, m-1$. Notice that $\pi_* \Phi = \phi_m$ and $\phi_m$ has bidegree $(n-m-p+1,n-m-p+1)$.

For $j=1,\ldots,m-1$, define $$ u^\lambda_{j} (y^m) :=  \int_{(\C^n,y^j)} u_{j}(\lambda^{-1}y^j + y^m) \,\phi_j (y^j).$$

Observe that $u^\lambda_j$ is a convolution against a (radially symmetric) smoothing  kernel on a disc of radius $|\lambda|^{-1}$ centered at $y^m$. Hence $u^\lambda_j$ is a smooth p.s.h.\ function on $(\C^n,y^m)$ decreasing pointwise to $u_j(y^m)$ as $\lambda \to \infty$ (see \cite[I.4.18]{Demailly_ag}). By (\ref{eq:def-R_J}) we get that
\begin{equation} \label{eq:R1-conv}
\ddc u_1^\lambda  \wedge \ldots \wedge \ddc u_{m-1}^\lambda  \wedge T \xrightarrow{\lambda \to \infty} R_1.
\end{equation}

Recall from (\ref{eq:R1-lambda}) that
\begin{equation*}
R_{1,\lambda} = \ddc u_{1}(\lambda^{-1}y^1 + y^m) \wedge \ldots \wedge  \ddc u_{m-1}(\lambda^{-1}y^{m-1} + y^m) \wedge T(y^m) .
\end{equation*}
Using the fact that the bidegree of each $\phi_j$, $j=1,\ldots,m-1$ is maximal, one has $$\ddc u_{j}(\lambda^{-1}y^j + y^m) \wedge \phi_j  = \ddc_{y^m} u_{j}(\lambda^{-1}y^j + y^m) \wedge \phi_j \quad j=1,\cdots, m-1.$$ 
Hence,
\begin{align*}
& R_{1,\lambda} \wedge \Phi =  \\ &\ddc_{y^m} u_{1}(\lambda^{-1}y^1 + y^m) \wedge \phi_1 (y^1) \wedge \ldots \wedge  \ddc_{y^m} u_{m-1}(\lambda^{-1}y^{m-1} + y^m) \wedge \phi_{m-1} (y^{m-1}) \wedge T(y^m) \wedge \phi_m (y^m).
\end{align*}
Taking the integral of both sides of the above equality and using Fubini's Theorem, one obtains
\begin{align*}
\langle   R_{1,\lambda}, \Phi \rangle &= \int_{(\C^n,y^m)} \bigg( \Big( \int_{(\C^n,y^1)} \ddc_{y^m} u_1 (\lambda^{-1}y^1 + y^m) \wedge \phi_1 (y^1) \Big) \wedge \cdots \\
&\cdots \wedge \Big( \int_{(\C^n,y^{m-1})} \ddc_{y^m} u_{m-1} (\lambda^{-1}y^{m-1} + y^m) \wedge \phi_{m-1} (y^{m-1}) \Big) \bigg) \wedge T(y^m) \wedge \phi_m(y^m) \\
&= \int_{(\C^n,y^m)} \ddc u_1^\lambda (y^m) \wedge \ldots \wedge\ddc u_{m-1}^\lambda (y^m) \wedge T(y^m) \wedge \phi_m(y^m) \\
&= \langle\ddc u_1^\lambda  \wedge \ldots \wedge\ddc u_{m-1}^\lambda  \wedge T, \phi_m\rangle.
\end{align*}

By (\ref{eq:R1-conv}), the last quantity tends to $\langle R_1, \phi_m \rangle = \langle R_1, \pi_*\Phi \rangle$ as $\lambda \to \infty$. This finishes the proof.
\end{proof}

The following result is an important consequence of the previous lemmas.

\begin{lemma} \label{lemma:bounded-mass}
Under the assumptions of Lemma \ref{lemma:not-top-deg}, the mass of $R_{1,\lambda}$ on compact sets is uniformly bounded.
\end{lemma}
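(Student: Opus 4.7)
\smallskip

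\noindent\textbf{Proof proposal.} The strategy is to dominate the mass by testing $R_{1,\lambda}$ against a carefully chosen split positive form, then expand that form by the multinomial formula into pieces each of which is controlled by one of the two previous lemmas.

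First I would fix a compact set $K \subset (\C^n)^m$ and, by enlarging if needed, assume $K = \prod_{j=1}^m \overline{B(0,R_j)}$. For each $j$ I choose a smooth, nonnegative, compactly supported, \emph{radial} cutoff $\chi_j$ on $(\C^n,y^j)$ with $\chi_j \equiv 1$ on $\overline{B(0,R_j)}$, and set $\chi := \prod_{j=1}^m \chi_j(y^j)$. Write $\beta_j$ for the standard K\"ahler form on $(\C^n,y^j)$ and $\beta_{\rm tot} = \sum_{j=1}^m \beta_j$. Since $R_{1,\lambda}$ is positive of bidegree $(m-1+p,m-1+p)$ on the $nm$-dimensional ambient space, positivity gives
$$\|R_{1,\lambda}\|_K \;\leq\; C\, \big\langle R_{1,\lambda},\, \chi \cdot \beta_{\rm tot}^{\,nm-m+1-p} \big\rangle$$
for a universal constant $C>0$, and it suffices to bound the right-hand side as $|\lambda|\to\infty$.

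Next, by the multinomial formula, $\beta_{\rm tot}^{\,nm-m+1-p}$ is a finite positive linear combination of terms $\beta_1^{a_1} \wedge \cdots \wedge \beta_m^{a_m}$ with $\sum_j a_j = nm-m+1-p$ and $0 \leq a_j \leq n$. Each resulting piece
$$\Phi_{\vec a} := \bigl(\chi_1\,\beta_1^{a_1}\bigr)(y^1) \wedge \cdots \wedge \bigl(\chi_m\,\beta_m^{a_m}\bigr)(y^m)$$
is a positive split test form with compact support, so the problem reduces to uniformly bounding $\langle R_{1,\lambda}, \Phi_{\vec a}\rangle$ for each $\vec a$. I then split into two cases. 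If $a_k < n$ for some $k \in \{1,\ldots,m-1\}$, the $y^k$-component of $\Phi_{\vec a}$ has bidegree $(a_k,a_k)\neq (n,n)$, so Lemma \ref{lemma:not-top-deg} gives $\langle R_{1,\lambda}, \Phi_{\vec a}\rangle \to 0$. Otherwise $a_k = n$ for every $k\in\{1,\ldots,m-1\}$ (and $a_m = n-m+1-p\geq 0$ by the standing hypothesis $m-1+p\leq n$); since $\chi_k$ is radial and $\beta_k^n$ is a constant multiple of $i^n \diff y^k\wedge \diff\overline{y^k}$, the factor $\chi_k\,\beta_k^n$ is a radial $(n,n)$-form, so Lemma \ref{lemma:top-degree} applies and $\langle R_{1,\lambda}, \Phi_{\vec a}\rangle$ converges to the finite quantity $\langle R_1, \pi_*\Phi_{\vec a}\rangle$.

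Summing over the finitely many multi-indices yields a bound on $\limsup_{|\lambda|\to\infty}\|R_{1,\lambda}\|_K$; combined with trivial control of $\|R_{1,\lambda}\|_K$ for $\lambda$ in any bounded range (obtained by the continuity in $\lambda$ of $(A_\lambda)_*$ applied to a fixed current with locally bounded mass), this gives the claimed uniform boundedness. I do not anticipate a serious obstacle: the proof is essentially bidegree bookkeeping that routes each multinomial piece to the correct earlier lemma. The only subtle point worth attention is that the radiality hypothesis of Lemma \ref{lemma:top-degree} is genuinely satisfied — this is precisely why $\chi_j$ is chosen radial from the outset, so that each $\chi_k\beta_k^n$ with $k\leq m-1$ is a rotation-invariant top-degree form on $(\C^n,y^k)$.
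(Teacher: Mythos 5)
Your proof is correct and follows essentially the same strategy as the paper: reduce to pairings against a positive split test form by expanding a power of the ambient K\"ahler form, then route each split piece either to Lemma \ref{lemma:not-top-deg} (lower bidegree on some $y^k$, $k\le m-1$) or to Lemma \ref{lemma:top-degree} (top bidegree on every $y^k$, $k\le m-1$). The one genuine difference is cosmetic but worth pointing out: the paper takes an arbitrary positive split test form and, in the top-degree case, dominates each $\phi_j$ by a radial positive $(n,n)$-form (using positivity of $R_{1,\lambda}$) before invoking Lemma \ref{lemma:top-degree}; you instead build radiality in from the start by choosing the cutoffs $\chi_1,\ldots,\chi_{m-1}$ radial, so that $\chi_k\beta_k^n$ is automatically rotation-invariant and no domination step is needed. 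Two small points to tidy: the compact set should be taken in $(\C^n)^{m-1}\times\Omega$ rather than $(\C^n)^m$, with the $y^m$-factor an arbitrary compact subset of $\Omega$ (and $\chi_m$ need not be radial, since Lemma \ref{lemma:top-degree} only requires radiality of $\phi_1,\ldots,\phi_{m-1}$); and your final remark about control for bounded $\lambda$ is correct but not really needed, as the lemma is invoked only for the limiting behaviour as $|\lambda|\to\infty$.
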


\begin{proof}
Let $\omega:=  \sum_{k=1}^n i \diff y_k^j \wedge \diff \overline{y_k^j}$ be the standard K\"ahler form on $(\C^n)^m = (\C^n,y^1) \times \cdots \times (\C^n,y^m)$ and set $\Theta := \omega^{nm-m+1-p}$. In order to prove the desired assertion, using the fact that $R_{1,\lambda}$ is positive,  it is enough to check that the mass of the trace measure $R_{1,\lambda}\wedge \Theta$ is uniformly bounded on compact subsets of $\Omega^m$. 

Notice that  the form $\Theta$ is a linear combination of positive split forms. Therefore, in order to obtain the above bound, it will be enough to prove that $\langle   R_{1,\lambda}, \Phi \rangle $ is uniformly bounded for any fixed positive split test form $\Phi = \phi_1(y^1) \wedge \ldots \wedge \phi_{m-1}(y^{m-1})  \wedge \phi_m(y^m)$ with compact support.

If $\phi_j$  is not of top degree for some $j=1,\ldots,m-1$, then, by Lemma \ref{lemma:not-top-deg}, we have that  $|\langle R_{1,\lambda}, \Phi\rangle | \to 0$ as $\lambda \to \infty$. In particular $|\langle R_{1,\lambda}, \Phi\rangle |$ is uniformly bounded. Hence, we can assume that $\phi_j$ has bidegree $(n,n)$ for every $j=1,\ldots,m-1$. In this case, since $\phi_j$ is always bounded by some radial positive test form, we can assume furthermore that $\phi_j$ is radial for every $j$.   From this observation and Lemma \ref{lemma:top-degree}, we have $\langle R_{1,\lambda}, \Phi \rangle \to \langle R_1, \pi_*\Phi \rangle$ as $\lambda \to \infty$. In particular, $|\langle R_{1,\lambda}, \Phi\rangle |$ is uniformly bounded.  This finishes the proof of the Lemma.
\end{proof}

We now recall from \cite[Section 3]{Dinh_Sibony_density} the notion of horizontal dimension of currents on vector bundles. Actually,  the authors consider projective fibrations, that is, the projective compactification $\P(E)$ of a given holomorphic vector bundle $E$. Here, we phrase the definitions and results for  vector bundles instead. The proofs can be easily adapted from the ones in \cite{Dinh_Sibony_density}. 

Let $V$ be a K\"ahler manifold of dimension $\ell$ with K\"ahler form $\omega_V$ and let $\pi: E \to V$ be a holomorphic vector bundle over $V$.

\begin{definition}
Let $S$ be a non-zero positive closed current on $E$. The \textbf{horizontal dimension} ($h$-dimension for short) of $S$ is the largest integer $j$ such that $S \wedge \pi^* \omega_V^j \neq 0$.
\end{definition}

We will need the following characterization of currents of minimal $h$-dimension.

\begin{lemma} \label{lemma:shadow-minimal-h-dim}
Let $S$ be a positive closed $(p,p)$-current on $E$ with $p \leq \ell$. Assume that the $h$-dimension of $S$ is smaller or equal to $\ell - p$. Then  the $h$-dimension of  $S$ is equal to $\ell - p$ and there is a positive closed $(p, p)$-current $S^h$ on $V$ such that $S = \pi^*(S^h)$.
\end{lemma}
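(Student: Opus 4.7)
The plan is to prove the lemma in three steps: first identify the $h$-dimension concretely in terms of the horizontal/vertical bidegree decomposition of $S$, then use the closedness of $S$ together with the vector bundle structure to show that a positive closed current of pure horizontal bidegree is automatically a pullback from $V$, and finally patch the local data.

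Step 1 (identifying the $h$-dimension). Work locally in a trivialization $E|_U \cong U \times \C^r$, where $r$ is the rank of $E$ and $U \subset V$ is a coordinate chart with coordinates $(v_1,\ldots,v_\ell)$; let $(f_1,\ldots,f_r)$ be the fiber coordinates. Any $(p,p)$-form on $E$ decomposes according to the number of vertical ($df_i$ or $d\bar f_j$) factors, and positivity of $S$ forces only diagonal contributions, so we may write $S = \sum_{k} S_k$ with $S_k$ of vertical bidegree $(k,k)$. Since $\pi^*\omega_V$ is purely horizontal of bidegree $(1,1)$, the wedge $S_k \wedge \pi^*\omega_V^j$ is nonzero iff $j \le \ell - p + k$. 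Consequently
$$
\text{$h$-dim}(S) = \ell - p + k^*, \qquad k^* := \max\{\,k : S_k \not\equiv 0\,\}.
$$
The hypothesis $h$-dim$(S) \le \ell - p$ forces $k^* = 0$; hence $h$-dim$(S) = \ell - p$ exactly and $S = S_0$ is of pure horizontal bidegree.

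Step 2 (pure horizontal bidegree plus closedness $\Rightarrow$ pullback). In the trivialization, write $S = \sum_{|I|=|J|=p} T_{IJ}\,dv_I \wedge d\bar v_J$ where the $T_{IJ}$ are distributional coefficients on $U \times \C^r$. The terms of $dS$ carrying a single $df_k$ (respectively $d\bar f_k$) factor are exactly the fiber derivatives $\partial_{f_k} T_{IJ}$ (resp.\ $\partial_{\bar f_k} T_{IJ}$), so $dS=0$ forces each $T_{IJ}$ to be independent of $f$ in the distributional sense. For non-smooth $S$ this is justified by convolving $S$ in the fiber direction with a positive smooth compactly supported radial kernel on $\C^r$ (using the linear structure of the fiber): the resulting $S_\epsilon$ are smooth, positive, closed, and still of pure horizontal bidegree, so the smooth argument yields $S_\epsilon = \pi^*S^h_{\epsilon,U}$, and passing to the limit $\epsilon \to 0$ gives $S|_{\pi^{-1}(U)} = \pi^* S^h_U$ for some positive closed $(\ell-p,\ell-p)$-current $S^h_U$ on $U$.

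Step 3 (globalization) and main difficulty. The local pullbacks $S^h_U$ patch: on overlaps, $\pi^* S^h_U = S = \pi^* S^h_{U'}$ and $\pi^*$ is injective on currents on $V$ (apply $\sigma^*$ for $\sigma\colon V \to E$ the zero section), so $S^h_U = S^h_{U'}$, producing a global positive closed $(\ell-p,\ell-p)$-current $S^h$ on $V$ with $\pi^*S^h = S$. The main obstacle is the rigorous implementation of Step~2 for non-smooth currents: one must verify that the fiberwise convolution preserves positivity, closedness, and the pure horizontal bidegree property, and that $S_\epsilon \to S$. Positivity and smoothness are standard for convolution with positive kernels, closedness follows from commuting $d$ with convolution in the fiber variables, and the bidegree property follows because the kernel involves no $dv_i, d\bar v_j$ factors; these steps use essentially that fibers are the linear vector space $\C^r$, carrying a natural translation action in any trivialization.
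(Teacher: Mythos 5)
The paper itself does not reproduce a proof of this lemma but defers to \cite[Lemma~3.4]{Dinh_Sibony_density}, so your attempt to give a self-contained argument is welcome, and the overall strategy (bidegree analysis, then closedness forces fiber-constancy, then gluing) is the right one. Steps~2 and~3 are sound: fiberwise convolution is indeed the correct device to regularize, and the injectivity of $\pi^*$ via the zero section handles the patching.

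The gap is in Step~1. The assertion that \emph{positivity of $S$ forces only diagonal contributions, so $S = \sum_k S_k$ with $S_k$ of vertical bidegree $(k,k)$} is false. A positive $(p,p)$-current has, in a splitting of coordinates into horizontal and vertical, components $S_{a,b}$ of vertical bidegree $(a,b)$ for \emph{all} pairs $(a,b)$, not just $a=b$. For instance, on $\C_v\times\C_f$ the form $i(\diff v+\diff f)\wedge(\diff\bar v+\diff\bar f)$ is positive and closed but has nonzero components of vertical bidegree $(0,1)$ and $(1,0)$. As stated, your deduction $h\text{-dim}(S)=\ell-p+k^*$ with $k^*:=\max\{k: S_{k,k}\neq 0\}$ does not follow, and the step ``$k^*=0$ hence $S=S_0$'' is not established. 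You are also using, without justification, the nontrivial \emph{if} direction of the equivalence ``$S_k\wedge\pi^*\omega_V^j\neq 0$ iff $j\leq\ell-p+k$''; this requires an argument (e.g. that the trace measure of a positive $S_k$ of horizontal bidegree $(p-k,p-k)$ against a product Kähler form picks out precisely the factor $(\pi^*\omega_V)^{\ell-p+k}$, so its nonvanishing is equivalent to $S_k\neq 0$).

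The correct route to the conclusion of Step~1 uses two ingredients you omitted. First, since $\pi^*\omega_V$ has pure horizontal bidegree, $S\wedge\pi^*\omega_V^{\ell-p+1}=0$ decouples by vertical bidegree into $S_{a,b}\wedge\pi^*\omega_V^{\ell-p+1}=0$ for every $(a,b)$. Second, the diagonal components $S_{a,a}$ are themselves positive, and for $a\geq 1$ the degree count allows the wedge $S_{a,a}\wedge\pi^*\omega_V^{\ell-p+1}$, so its vanishing forces $S_{a,a}=0$ by the trace-measure argument above. Finally — and this is the missing idea — the off-diagonal components of a positive current obey a Cauchy--Schwarz bound by the diagonal ones: if $S_{a,a}=0$ then $S_{a,b}=S_{b,a}=0$ for every $b$. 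Hence $S_{a,a}=0$ for all $a\geq 1$ implies $S=S_{0,0}$ is of pure horizontal bidegree. Once that is established your Steps~2 and~3 complete the proof. (A small separate remark: as stated in the paper the resulting $S^h$ has bidegree $(p,p)$, i.e.\ bidimension $(\ell-p,\ell-p)$ on $V$; the wording ``$(\ell-p,\ell-p)$-current'' is a slip in the paper and your write-up inherits it.)
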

\begin{proof}
See \cite[Lemma 3.4]{Dinh_Sibony_density}.
\end{proof}

Now let $V = \Omega \subset (\C^n,y^m)$, $\omega_V = \sum_{k=1}^n \diff y^m_k \wedge \diff \overline y^m_k  := \beta(y^m)$ be the standard K\" ahler form on $\Omega$ and $E$ be the trivial bundle $\pi: (\C^n)^{m-1} \times \Omega \to \Omega$, $\pi(y',y^m) = y^m$. 

Recall from Lemma \ref{lemma:bounded-mass} that $(R_{1,\lambda})_\lambda$ is a relatively compact family of positive closed $(m-1+p,m-1+p)$-currents on $E$.

\begin{lemma} \label{lemma:minimal-h-dim}
In the assumptions of Lemma \ref{lemma:not-top-deg}, let $R_{1,\infty}$ be a limit point of the family $R_{1,\lambda}$ as $\lambda \to \infty$.  Then the $h$-dimension of $R_{1,\infty}$  is minimal, equal to $n-m+1-p$. In particular there is a positive closed $(m-1+p,m-1+p)$-current $R_{1,\infty}^h$ on $\Omega$ such that $R_{1,\infty} = \pi^* R_{1,\infty}^h$.
\end{lemma}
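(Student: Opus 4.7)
The plan is to show that $R_{1,\infty}\wedge\pi^*\beta^{j}=0$ for every integer $j\ge n-m+2-p$; once this is established, the current $R_{1,\infty}$ has $h$-dimension at most $n-m+1-p$, which is exactly the minimal value allowed by its bi-degree, and Lemma \ref{lemma:shadow-minimal-h-dim} (applied with $V=\Omega$, $E=(\C^n)^{m-1}\times\Omega$, $\ell=n$ and $p$ replaced by $m-1+p$) immediately produces a positive closed $(m-1+p,m-1+p)$-current $R_{1,\infty}^{h}$ on $\Omega$ with $R_{1,\infty}=\pi^{*}R_{1,\infty}^{h}$.

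To prove the vanishing, I test the current against an arbitrary smooth compactly supported form $\Psi$ on $E$. By standard multilinearity in the product coordinates $(y^{1},\ldots,y^{m})$ and linear decomposition, it is enough to consider a positive split test form $\Psi=\psi_{1}(y^{1})\wedge\cdots\wedge\psi_{m}(y^{m})$; note that $\pi^{*}\beta^{j}\wedge\Psi$ is then again of positive split type, with $y^{k}$-factor $\psi_{k}$ for $k<m$ and $y^{m}$-factor $\beta^{j}\wedge\psi_{m}$. I will split into two cases according to the bi-degrees of the $\psi_{k}$ for $k<m$.

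If some $\psi_{k}$ with $1\le k\le m-1$ has bi-degree strictly less than $(n,n)$, then $\pi^{*}\beta^{j}\wedge\Psi$ fails to be of top bi-degree on $y^{k}$, so by the hypothesis that $R_{J,\lambda}\to\pi^{*}R_{J}$ for every $|J|\le m-2$, Lemma \ref{lemma:not-top-deg} yields
\[
\langle R_{1,\lambda},\,\pi^{*}\beta^{j}\wedge\Psi\rangle\longrightarrow 0\qquad\text{as }|\lambda|\to\infty,
\]
hence $\langle R_{1,\infty},\,\pi^{*}\beta^{j}\wedge\Psi\rangle=0$. If instead every $\psi_{k}$ with $k<m$ has bi-degree $(n,n)$, then a bi-degree count is decisive: the total bi-degree of $R_{1,\infty}\wedge\pi^{*}\beta^{j}\wedge\Psi$ on $(\C^{n})^{m}$ must be $(nm,nm)$, and the pieces on $y^{1},\ldots,y^{m-1}$ already contribute $(n(m-1),n(m-1))$. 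Since $R_{1,\infty}$ is of bi-degree $(m-1+p,m-1+p)$, the $y^{m}$-component of $\psi_{m}$ is forced to have bi-degree $(n-m+1-p-j,\,n-m+1-p-j)$. For $j\ge n-m+2-p$ this is strictly negative, which is impossible, so $R_{1,\infty}\wedge\pi^{*}\beta^{j}\wedge\Psi=0$ identically for trivial bi-degree reasons.

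Combining the two cases gives $\langle R_{1,\infty}\wedge\pi^{*}\beta^{j},\Psi\rangle=0$ for every test form $\Psi$, so $R_{1,\infty}\wedge\pi^{*}\beta^{j}=0$ for all $j\ge n-m+2-p$, as required. The only mildly delicate point is the reduction to positive split test forms; once this is granted, the proof is a combination of Lemma \ref{lemma:not-top-deg} with elementary bi-degree bookkeeping, and no further analytical input is needed beyond what is already available from Lemma \ref{lemma:bounded-mass}.
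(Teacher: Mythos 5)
Your proposal is correct and follows essentially the same route as the paper: both reduce to testing $R_{1,\infty}\wedge\pi^*\beta^{n-m-p+2}$ against positive split test forms, both use the constraint that the trace of $R_{1,\lambda}$ on each $y^j$ ($j<m$) lives in bidegree at most $(1,1)$ so that only $\phi_j$ of bidegree $(n-1,n-1)$ or $(n,n)$ matter, and both invoke Lemma \ref{lemma:not-top-deg} to kill the nontrivial case. The only cosmetic difference is the bookkeeping: the paper fixes the relevant power $j=n-m-p+2$ and uses a total-degree count to conclude that at least one $\phi_j$ must have bidegree $(n-1,n-1)$, whereas you split into the two cases ``some $\psi_k$ has bidegree below $(n,n)$'' versus ``all have bidegree $(n,n)$,'' handling the first via Lemma \ref{lemma:not-top-deg} and the second by the same degree count showing $\psi_m$ would need negative bidegree. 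These are logically equivalent, and both rely on Lemma \ref{lemma:bounded-mass} for relative compactness and Lemma \ref{lemma:shadow-minimal-h-dim} for the final pullback structure.
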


\begin{proof}
Let $\lambda_k$ be a sequence tending to $\infty$ such that $R_{1,\lambda_k} \to R_{1,\infty}$. By  Lemma \ref{lemma:shadow-minimal-h-dim}, we only need to show that $R_{1,\infty} \wedge \pi^* \beta^{n-m-p+2}(y^m) = 0$. To do this, it is enough to verify that $$\langle R_{1,\infty} \wedge \pi^* \beta^{n-m-p+2}(y^m), \Phi \rangle = 0$$ for every positive split test form $\Phi$.

Let $\Phi = \phi_1(y^1) \wedge \ldots \wedge \phi_{m-1}(y^{m-1})  \wedge \phi_m(y^m)$ be such a form. As in the beginning of the proof of Lemma \ref{lemma:not-top-deg}, we may assume that each $\phi_j$, $j=1,\cdots,m-1$ has bidegree $(n,n)$ or $(n-1,n-1)$. Since the total bidegree of $\Phi$ is  $(p',p')$, where 
$$p'= nm-(n-m-p+2)- (m-1+p)= nm-n-1,$$
at least one of $\phi_j$, $j=1,\ldots,m-1$ has bidegree $(n-1,n-1)$. In this case,  by Lemma \ref{lemma:not-top-deg}, one has  $\langle R_{1,\lambda} \wedge \pi^* \beta^{n-m-p+2}(y^m), \Phi \rangle \to 0$ as $\lambda \to \infty$.  This finishes the proof.
\end{proof}

We are now in position to prove Theorem \ref{the-cegrell-DS}. 

\begin{proof}[End of proof of Theorem \ref{the-cegrell-DS}] Recall our notation
\[
R_{j,\lambda}:= (A_\lambda)_* \big( \ddc \widetilde u_j \wedge \cdots \wedge \ddc \widetilde u_{m-1} \wedge \widetilde T \big) \eqno{{\scriptstyle( 1\,\leq\, j\, \leq\, m-1)},}
\]
\[
R_{J,\lambda}:= (A_\lambda)_* \bigg(  \bigwedge_{j \in J} \ddc \widetilde u_j  \wedge \widetilde T \bigg) \eqno{{\scriptstyle( J\, \subset\, \{1,\ldots, m-1\})},}
\]
\[
R_j := R_{\{j,\ldots,m-1\}} = \ddc  u_j \wedge \cdots \wedge \ddc  u_{m-1} \wedge  T\eqno{{\scriptstyle( 1\,\leq\, j\, \leq\, m-1)}.}
\]
Recall also that proving (\ref{eq:cegrell-DS}) is equivalent to proving that
\begin{equation*}
R_{1,\lambda} \,\, \stackrel{|\lambda| \to \infty}{\longrightarrow} \pi^* R_1. 
\end{equation*}

We'll proceed by  induction on $m$.  When $m=1$ the result is obvious.   Now let $m \geq 2$ and assume that  $R_{J,\lambda} \to \pi^* R_J$ as $\lambda \to \infty$ for every $J \subset \{1,\ldots,m-1\}$ such that $|J| \leq m -2$. When $m=2$ this assumption is vacuous.  Then, the hypothesis of Lemma \ref{lemma:not-top-deg} is satisfied. By Lemma \ref{lemma:bounded-mass}, the family $(R_{1,\lambda})_\lambda$ is relatively compact. Let $R_{1,\infty} = \lim_{\lambda_k \to \infty} R_{1,\lambda_k}$ be one of its limit points. By Lemma \ref{lemma:minimal-h-dim}, there is a positive closed $(m-1+p,m-1+p)$-current $R_{1,\infty}^h$ on $\Omega$ such that $R_{1,\infty} = \pi^* R_{1,\infty}^h$. We need to show that $R_{1,\infty}^h = R_1$.

Let $\phi_m$ be a test form on $(\C^n,y^m)$. Take $\phi_1,\ldots,\phi_{m-1}$ positive radial $(n,n)$-forms with compact support such that  $\int_{(\C^n,y^j)} \phi_j = 1$ for every $j=1, \ldots, m-1$. Then $\Phi := \phi_1  \wedge \cdots \wedge \phi_{m-1} \wedge \phi_m$ is such that $\pi_* \Phi = \phi_m$. Using Lemma \ref{lemma:top-degree}, we get 
\begin{align*}
\langle R_{1,\infty}^h, \phi_m \rangle &= \langle R_{1,\infty}^h, \pi_*\Phi \rangle = \langle \pi^* R_{1,\infty}^h, \Phi \rangle =  \langle  R_{1,\infty}, \Phi \rangle \\ &= \lim_{\lambda_k \to \infty} \langle  R_{1,\lambda_k}, \Phi \rangle =  \langle  R_{1}, \pi_*\Phi \rangle = \langle R_{1}, \phi_m \rangle.
\end{align*}
Since $\phi_m$ is arbitrary,  we get  $R_{1,\infty}^h = R_1$. This concludes the proof of the Theorem.
\end{proof}

\begin{remark}
In the statement of Theorem \ref{the-cegrell-DS}, if we consider the case where $m-1+p >n$, then  the arguments in  the above proof still work and we obtain that the associated density current vanishes.
\end{remark}

Let $R$ be a positive closed current and $v$ be a p.s.h.\ function. If $v$ is locally integrable with respect to (the trace measure) of $R$, we define, following Bedford-Taylor,
\begin{equation} \label{eq:def-bedford-taylor}
\ddc v \wedge R := \ddc (v R).
\end{equation}

For a collection $v_1, \ldots, v_{s}$ of  p.s.h.\ functions, we can apply the above definition recursively, as long as the integrability conditions are satisfied. 

\begin{definition} \label{def-bedford-taylor} We say that the intersection  of $\ddc v_1, \ldots, \ddc v_s,R$ is \emph{classically well-defined} if for every non-empty subset $J=\{j_1, \ldots, j_k\}$ of $\{1, \ldots, s\}$, we have that $v_{j_k}$ is locally integrable with respect to the trace measure of $R$ and inductively,  $v_{j_r}$ is locally integrable with respect to the trace measure of $\ddc v_{j_{r+1}} \wedge \cdots \wedge \ddc v_{j_k} \wedge R$ for $r=k-1, \ldots, 1$, and the product $\ddc v_{j_{1}} \wedge \cdots \wedge \ddc v_{j_k} \wedge R$ is continuous under decreasing sequences of p.s.h.\ functions.
\end{definition}

The last definition is slightly more restrictive than the one given in \cite{Viet_Lucas}.  We have the following comparison result between the Dinh-Sibony product and the above notion of  wedge products. This result is a direct consequence of  Theorem \ref{the-cegrell-DS}.


\begin{corollary} \label{cor:DS-vs-classical} Let $m \geq 2$ and $p \geq 0$ be such that $m-1+p \leq n$. Let $u_1, \ldots, u_{m-1}$ be p.s.h.\ functions on $\Omega$ and let $T$ be a positive closed $(p,p)$-current on $\Omega$.  Assume that $\ddc u_{1} \wedge \ldots \wedge \ddc u_{m-1} \wedge T$ is classically well-defined. Then the Dinh-Sibony product of  $\ddc u_1, \ldots, \ddc u_{m-1}, T$ is well-defined and 
$$\ddc u_1 \curlywedge \cdots \curlywedge \ddc u_{m-1} \curlywedge T =\ddc u_{1} \wedge \ldots \wedge \ddc u_{m-1} \wedge T.$$
\end{corollary}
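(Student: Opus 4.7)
The plan is to apply Theorem \ref{the-cegrell-DS} directly, after verifying that its two hypotheses follow from the classical well-definedness of $\ddc u_1 \wedge \cdots \wedge \ddc u_{m-1} \wedge T$ in the sense of Definition \ref{def-bedford-taylor}.

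Hypothesis (i) of Theorem \ref{the-cegrell-DS} is essentially a reformulation of the continuity clause of Definition \ref{def-bedford-taylor}. Given any subset $J = \{j_1, \ldots, j_k\} \subset \{1, \ldots, m-1\}$ and smooth p.s.h.\ functions $u_{j_r}^\ell \searrow u_{j_r}$, the classical well-definedness asserts that $\ddc u_{j_1}^\ell \wedge \cdots \wedge \ddc u_{j_k}^\ell \wedge T$ converges to the Bedford--Taylor product $R_J := \ddc u_{j_1} \wedge \cdots \wedge \ddc u_{j_k} \wedge T$, which is precisely the condition required in (i).

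The main step is the verification of hypothesis (ii): for every $J \subset \{1, \ldots, m-1\}$ and every $k \in \{1, \ldots, m-1\} \setminus J$, the Lelong number $\nu(u_k, \cdot)$ must vanish almost everywhere with respect to the trace measure of $R_J$. To establish this, I would apply Definition \ref{def-bedford-taylor} to the enlarged subset $J' = J \cup \{k\}$, ordering its elements so that $k$ sits at the outermost position of the recursion. The iterated integrability clause of the definition then yields that $u_k$ is locally integrable with respect to the trace measure of $R_J$. In particular, the polar set $\{u_k = -\infty\}$ must have zero trace-mass. Since a strictly positive Lelong number at a point $x$ forces $u_k(x) = -\infty$, we have the inclusion $\{\nu(u_k, \cdot) > 0\} \subset \{u_k = -\infty\}$, and therefore $\nu(u_k, \cdot) = 0$ almost everywhere with respect to the trace measure of $R_J$, as needed.

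With (i) and (ii) in hand, Theorem \ref{the-cegrell-DS} applies and immediately yields the identity $\ddc u_1 \, \Curlywedge \, \cdots \, \Curlywedge \, \ddc u_{m-1} \, \Curlywedge \, T = \ddc u_1 \wedge \cdots \wedge \ddc u_{m-1} \wedge T$. I do not foresee any substantial obstacle; the only slightly delicate point is choosing the appropriate ordering in the recursive Bedford--Taylor construction so that $u_k$'s local integrability against the trace of $R_J$ becomes an explicit consequence of the classical well-definedness hypothesis. The remainder is a routine invocation of the main theorem of the section.
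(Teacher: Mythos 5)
Your proof is correct and takes essentially the same approach as the paper: both arguments reduce hypothesis (ii) to the elementary inclusion $\{\nu(u_k,\cdot) > 0\} \subset \{u_k = -\infty\}$ combined with the local integrability of $u_k$ against the trace measure of $R_J$, which is supplied by Definition \ref{def-bedford-taylor}. Your proposal merely spells out a detail the paper leaves implicit, namely that one should apply the definition to the enlarged set $J\cup\{k\}$ with $k$ placed at the outermost spot of the recursion in order to extract the needed integrability.
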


We note that \cite[Theorem 1.1]{Viet_Lucas} asserts a similar conclusion, but there's a slip in the proof of the result as stated there.


\section{Products in the B\l ocki-Cegrell class and the domain of definition of the Monge-Amp\` ere operator}

In this section we apply Theorem \ref{the-cegrell-DS} to studying the domain of definition of the Monge-Amp\` ere operator via Dinh-Sibony's intersection product. Let $\Omega$ be a domain in $\mathbb{C}^n$. Denote by $\PSH(\Omega)$ the set of p.s.h.\ functions on $\Omega$.

\begin{definition} \label{def:MA}
The \textbf{B\l ocki-Cegrell class} on $\Omega$ is the subset $\mathcal D (\Omega)$ of $\PSH(\Omega)$ consisting of functions $u$ with the following property: there exists a measure $\mu$ in $\Omega$ such that for every open set $U \subset \Omega$ and every sequence $(u_\ell)_\ell$ of smooth p.s.h.\ functions on $U$ decreasing to $u$ pointwise as $\ell \to \infty$, we have that $(\ddc u_\ell)^n$ converges to $\mu$ as $\ell \to \infty$.

For $u \in \mathcal D (\Omega)$, we define $(\ddc u)^n := \mu$, where $\mu$ is the above measure.
\end{definition}

The class $\mathcal D (\Omega)$ is the largest subset of $\PSH(\Omega)$ where we can define a Monge-Amp\`ere operator that coincides with the usual one for smooth p.s.h.\ functions and which is continuous under decreasing sequences, see \cite{cegrell:def-MA,blocki:domain-MA}.

We first need the following result ensuring the existence of the mixed products in the B\l ocki-Cegrell class.

\begin{proposition} \label{prop:blocki-cegrell-current}
Let $\Omega$  be a domain in $\C^n$ and let  $u_1,u_2, \ldots, u_m$, $1 \leq m \leq n$ be p.s.h.\ functions in  $\mathcal D(\Omega)$. Then, there exists a positive closed $(m,m)$-current $S_m$ such that for every open set $U \subset \Omega$ and every sequence $(u^\ell_j)_\ell$ of smooth p.s.h.\ functions on $U$ decreasing to $u_j$ pointwise as $\ell \to \infty$, we have that
\begin{equation} \label{eq:blocki-cegrell-current-convergence}
\ddc u^\ell_1 \wedge \cdots \wedge \ddc u^\ell_m \to S_m \quad \text{on }\, U \,\text{ as }\, \ell \to \infty.
\end{equation}

\end{proposition}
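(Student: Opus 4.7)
My plan is to reduce Proposition \ref{prop:blocki-cegrell-current} to the top-degree case $m=n$, which is essentially the definition of $\mathcal D(\Omega)$, via a polarization argument combined with a decomposition of test forms. A standing input throughout is the standard fact, which I would recall from the B\l ocki-Cegrell theory, that $\mathcal D(\Omega)$ is a convex cone: $av+bw \in \mathcal D(\Omega)$ whenever $v,w \in \mathcal D(\Omega)$ and $a,b \geq 0$.

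For the case $m=n$, I would argue as follows. For any positive reals $t_1,\ldots,t_n$, the convex cone property gives $v_{\vec t}:=\sum_j t_j u_j \in \mathcal D(\Omega)$ with smooth decreasing approximation $v_{\vec t}^\ell:=\sum_j t_j u_j^\ell$, so the defining property of $\mathcal D(\Omega)$ yields the convergence $(\ddc v_{\vec t}^\ell)^n \to (\ddc v_{\vec t})^n$, independent of the chosen approximation. Expanding multinomially,
$$(\ddc v_{\vec t}^\ell)^n \,=\, \sum_{k_1+\cdots+k_n=n}\frac{n!}{k_1!\cdots k_n!}\,t_1^{k_1}\cdots t_n^{k_n}\,(\ddc u_1^\ell)^{k_1}\wedge\cdots\wedge(\ddc u_n^\ell)^{k_n},$$
which is a polynomial in $\vec t$ of total degree $n$ with current-valued coefficients. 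Since it converges for every $\vec t>0$, a Vandermonde-type argument shows that each coefficient converges individually. The coefficient of $t_1\cdots t_n$ is $n!\,\ddc u_1^\ell \wedge \cdots \wedge \ddc u_n^\ell$, providing the desired measure $R_n$ independent of the approximation.

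For the case $1\leq m<n$, I would reduce to the top-degree case by applying the previous step to the $n$-tuple $(u_1,\ldots,u_m,\rho_1,\ldots,\rho_{n-m})$ with approximations $(u_1^\ell,\ldots,u_m^\ell,\rho_1,\ldots,\rho_{n-m})$, where $\rho_1,\ldots,\rho_{n-m}$ are arbitrary smooth p.s.h. functions on $U$ (automatically in $\mathcal D$). This yields the convergence of
$$\ddc u_1^\ell\wedge\cdots\wedge\ddc u_m^\ell \wedge \ddc\rho_1\wedge\cdots\wedge\ddc\rho_{n-m}$$
as positive measures. Specializing to $\rho_j(z):=\|z\|^2$ bounds the trace measures of $R_m^\ell:=\ddc u_1^\ell\wedge\cdots\wedge\ddc u_m^\ell$ locally uniformly, so the family is weakly relatively compact. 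Uniqueness of the weak limit follows by testing against smooth compactly supported $(n-m,n-m)$-forms, which can be written as finite sums $\sum_k c_k(z)\,\ddc\rho_{k,1}\wedge\cdots\wedge\ddc\rho_{k,n-m}$ with smooth $c_k$ and smooth p.s.h. $\rho_{k,j}$. Such a decomposition is obtained by expressing each basis form $i\,dz_j\wedge d\bar z_k$ as $\ddc$ of a real or imaginary part of the monomial $z_j\bar z_k$, and then rendering the resulting functions smooth p.s.h. by adding large multiples of $\|z\|^2$ and taking differences; a partition of unity makes this global.

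The main obstacle in the lower-degree step is this algebraic decomposition of test forms and the associated verification that product-type forms suffice to determine the weak limit; this is routine but requires careful bookkeeping. Alternatively, one could bypass it altogether by directly invoking the continuity result for mixed products in $\mathcal D(\Omega)$ established in \cite{cegrell:def-MA,blocki:domain-MA}, but the polarization approach above has the advantage of being largely self-contained from the definition of the B\l ocki-Cegrell class.
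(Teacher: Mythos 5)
Your proposal is correct but takes a genuinely different route from the paper's proof. The paper works directly on the mixed products: it localizes to a pair of concentric balls $\B_1\Subset\B_2$, replaces each $u_j$ by the B\l ocki envelope $\widetilde u_j$ from Lemma~\ref{lemma:blocki-envelope} (which agrees with $u_j$ on $\B_1$, vanishes on $\partial\B_2$, and has uniformly bounded Monge--Amp\`ere mass), proves Lemma~\ref{le-dense-hddcv} to write any test $(n-m,n-m)$-form as a combination of $h\,\ddc v_1\wedge\cdots\wedge\ddc v_{n-m}$ with $h,v_j\in\PSH(\B_2)\cap\Cc^0(\overline\B_2)$ vanishing on $\partial\B_2$, and then invokes Cegrell's H\"older-type inequality \cite[Cor.\,5.6]{cegrell:def-MA} together with B\l ocki's original argument \cite[Thm.\,1.1]{blocki:domain-MA} to obtain the convergence. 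You instead reduce to the top-degree case by polarization, relying crucially on the fact that $\mathcal D(\Omega)$ is a convex cone, extract the mixed product as the coefficient of $t_1\cdots t_n$ via a Vandermonde/linear-algebra argument, and handle $m<n$ by adjoining smooth p.s.h.\ auxiliary functions $\rho_j$ and decomposing test forms with smooth (rather than merely continuous, boundary-normalized) p.s.h.\ potentials and smooth compactly supported coefficients. Both decompositions of test forms serve the same purpose; yours can afford to be cruder precisely because the auxiliary $\rho_j$ are smooth, so no Cegrell--H\"older estimate is needed at that stage. The trade-off is that the burden shifts entirely onto the convexity of $\mathcal D(\Omega)$, which is true but not immediate from Definition~\ref{def:MA} alone: it follows from Cegrell's theory for the class $\Ec$ combined with B\l ocki's local characterization, and ultimately relies on the same H\"older-type inequality the paper uses directly. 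So your proof is conceptually cleaner and more modular, but its self-contained appearance is a bit deceptive; the convexity should be stated as an explicit citation rather than a passing remark, since it is the load-bearing input.
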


 For $u_1,u_2, \ldots, u_m \in \mathcal D(\Omega)$, we define their wedge product by
\begin{equation}
\ddc u_1 \wedge \cdots \wedge \ddc u_m := S_m,
\end{equation}
where $S_m$ is the current  appearing in the above proposition. In particular, for $u \in \mathcal D(\Omega)$, one sees that $(\ddc u)^n$ is the Monge-Amp\`ere measure given in Definition \ref{def:MA}.

 As mentioned in the Introduction, Proposition \ref{prop:blocki-cegrell-current} is known when $m=n$ and the case $m<n$ might also be known to experts. We give a proof here for completeness, following closely the proof of \cite[Theorem 1.1]{blocki:domain-MA}. A simplifying step in \cite{blocki:domain-MA} is the fact that it suffices to work with test functions that are p.s.h.\ on a ball and vanish in its boundary. In the case $m<n$, this step is replaced by the following lemma.

\begin{lemma} \label{le-dense-hddcv}  Let $\B_1 \Subset \B_2 \Subset \Omega$ be balls. Let $\mathcal{A}$ be the vector space generated by forms of the type $h \, \ddc v_1 \wedge \cdots \wedge \ddc v_{n-m}$, where $h, v_1, \ldots,v_{n-m}$ are  p.s.h.\ functions on $\B_2$ which are continuous up to $\partial \B_2$ and vanish on $\partial \B_2$. Then, every  smooth $(n-m,n-m)$-form $\psi$ compactly supported in $\B_1$ is in $\mathcal{A}$. 
\end{lemma}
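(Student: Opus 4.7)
The plan is to write $\psi$ explicitly as a finite linear combination of generators of $\mathcal{A}$. Expanding in the standard basis, $\psi = \sum_{I,J} f_{I,J}\, dz_I \wedge d\bar z_J$ with each $f_{I,J}$ smooth and compactly supported in $\B_1$, so by linearity it suffices to treat a single term $\psi = f\,dz_I \wedge d\bar z_J$ with $|I|=|J|=n-m$. For the ``directional'' part, polarization applied to $\ddc|L|^2 = (i/\pi)\,dL \wedge d\bar L$ (for a complex linear form $L$) shows that each elementary $(1,1)$-form $dz_i \wedge d\bar z_j$ is a $\C$-linear combination of $\ddc|L|^2$'s, using for instance $L \in \{z_i,\,z_j,\,z_i+z_j,\,z_i+iz_j\}$. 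Taking wedges I obtain a finite expansion
\[
dz_I \wedge d\bar z_J \;=\; \sum_\alpha c_\alpha\, \ddc|L_1^\alpha|^2 \wedge \cdots \wedge \ddc|L_{n-m}^\alpha|^2
\]
as constant $(n-m,n-m)$-forms on $\C^n$, with each $|L_k^\alpha|^2$ p.s.h.

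The next step is to replace each $|L_k^\alpha|^2$ by a p.s.h.\ function on $\B_2$ vanishing on $\partial\B_2$ while keeping the $\ddc$ unchanged on $\B_1$. Center $\B_2$ at the origin with radius $r$ and set $\rho(z):=|z|^2-r^2$, which is p.s.h., continuous on $\bar\B_2$, and zero on $\partial\B_2$. Pick $M_k^\alpha > \sup_{\bar\B_2}|L_k^\alpha|^2$ so that $|L_k^\alpha|^2 - M_k^\alpha \le 0$ on $\bar\B_2$, and then choose $C>0$ large enough that $C\rho < |L_k^\alpha|^2 - M_k^\alpha$ pointwise on $\bar\B_1$ for every $k,\alpha$ (possible because $C\rho$ tends uniformly to $-\infty$ on $\bar\B_1$ while $|L_k^\alpha|^2 - M_k^\alpha$ stays bounded below). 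Define
\[
\tilde Q_k^\alpha := \max\bigl(|L_k^\alpha|^2 - M_k^\alpha,\, C\rho\bigr).
\]
As a max of p.s.h.\ functions, $\tilde Q_k^\alpha$ is p.s.h., continuous on $\bar\B_2$, vanishes on $\partial\B_2$ (both arguments are $\le 0$ on $\bar\B_2$ with $C\rho=0$ on $\partial\B_2$), and coincides with $|L_k^\alpha|^2 - M_k^\alpha$ on $\bar\B_1$. Consequently $\ddc\tilde Q_k^\alpha = \ddc|L_k^\alpha|^2$ on $\B_1$.

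For the coefficient $f$, I pick $C'$ large enough that $\ddc f + C'\beta \ge 0$ on $\B_2$ (possible since $\ddc f$ is smooth and compactly supported and $\beta = \ddc|z|^2$), and set $h_-:= C'\rho$ and $h_+ := f + C'\rho$, with $f$ extended by zero outside its support. Both $h_\pm$ are p.s.h., continuous on $\bar\B_2$, vanish on $\partial\B_2$, and satisfy $h_+ - h_- = f$ on $\B_2$. Expanding
\[
\sum_\alpha c_\alpha\, (h_+ - h_-)\,\ddc\tilde Q_1^\alpha \wedge \cdots \wedge \ddc\tilde Q_{n-m}^\alpha
\]
yields a finite linear combination of generators of $\mathcal A$. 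On $\B_1$ it equals $f\sum_\alpha c_\alpha \ddc|L_1^\alpha|^2 \wedge \cdots = f\,dz_I \wedge d\bar z_J = \psi$ by the polarization identity; on $\B_2 \setminus \supp f$ the factor $h_+-h_-=f$ vanishes, so the expression and $\psi$ are both zero. Hence the two sides agree on all of $\B_2$, proving $\psi \in \mathcal A$.

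The main obstacle is producing an identity valid on all of $\B_2$ rather than just on $\B_1$, since the generators of $\mathcal A$ are not supported in $\B_1$. The max-trick, combined with the factorization $f=h_+-h_-$ through two p.s.h.\ functions that coincide with the single function $C'\rho$ outside $\supp f$, arranges exact cancellation outside $\supp f$ without disturbing the behaviour on $\B_1$, where the identity was built.
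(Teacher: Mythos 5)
Your proof is correct, and it follows the same two-stage strategy as the paper (a polarization identity reducing $\psi$ to wedges of $\ddc$ of quadratics, followed by a modification that makes the potentials vanish on $\partial\B_2$ without changing their $\ddc$ on $\B_1$, and finally writing the coefficient as a difference of two p.s.h.\ functions vanishing on the boundary). The one genuine difference is in the modification step. The paper invokes the envelope $\widetilde v_\ell$ from Lemma \ref{lemma:blocki-envelope}, which is a solution of a Dirichlet problem for the homogeneous Monge--Amp\`ere equation; you instead use the elementary ``gluing'' function $\max\bigl(|L|^2 - M,\, C\rho\bigr)$ with $\rho = |z|^2 - r^2$. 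The max-trick is more self-contained (it avoids the Bedford--Taylor/Walsh machinery behind Lemma \ref{lemma:blocki-envelope}) and is just as effective here, since we only need equality with $|L|^2 - M$ on a neighbourhood of $\overline\B_1$ together with continuity up to $\partial\B_2$ and vanishing there. Your argument is also slightly more careful in one useful respect: you observe explicitly that the resulting identity with $\psi$ holds on all of $\B_2$, not merely on $\B_1$ — the two sides agree on $\B_1$ by construction and both vanish on $\B_2 \setminus \supp f$ because the factor $h_+ - h_- = f$ does. This is the point implicitly used in the paper (since $h$ is compactly supported in $\B_1$), and it is good that you made it explicit.

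One minor remark: when you write $\ddc \tilde Q_k^\alpha = \ddc |L_k^\alpha|^2$ on $\B_1$, the inequality $C\rho < |L_k^\alpha|^2 - M_k^\alpha$ on the compact set $\overline\B_1$ actually persists on an open neighbourhood of $\overline\B_1$ by continuity, so the equality of potentials holds on such a neighbourhood; this is what makes the identity of currents clean on $\B_1$ with no boundary subtleties. This is implicit in your write-up and worth stating if you want the argument to be airtight.
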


\proof  It is a standard fact that every smooth $(n-m,n-m)$-form $\psi$ compactly supported in $\B_1$ can be written as  a linear combination of   forms of type $\eta:=h \, i\gamma_1 \wedge \overline \gamma_1 \wedge \cdots \wedge i \gamma_{n-m} \wedge \overline \gamma_{n-m}$, where $h$ is a smooth function with compact support in $\B_1$ and $\gamma_1, \ldots, \gamma_{n-m}$ are $(1,0)$-forms  with constant coefficients, see  \cite[III.1.4]{Demailly_ag}. Hence, it is enough to prove the desired assertion for $\eta$ as above. 

Write $\gamma_\ell = \sum_{j=1}^n a_{j\ell} d z_j$, for $1 \le \ell \le n-m$, where  $a_{j \ell} \in \C$. Observe  $i \gamma_\ell \wedge \overline \gamma_\ell = \ddc v_\ell$, where $v_\ell(z):= \pi \big |\sum_{j=1}^n a_{j\ell} z_j \big|^2$, where $(z_1, \ldots, z_n)$ are the standard coordinates on $\C^n$. Let $\widetilde{v}_\ell$ be the envelope constructed from $v_\ell$ as in Lemma \ref{lemma:blocki-envelope} for $1 \le \ell \le n-m$. We have that  $\widetilde{v}_\ell= v_\ell$ on $\B_1$,  $\widetilde{v}_\ell \in \PSH(\B_2) \cap \cali{C}^0(\overline \B_2)$, 
 and $\widetilde{v}_\ell =0$ on $\partial \B_2$.  This combined with the fact that $h$ is compactly supported in $\B_1$ gives  $\eta= h \ddc \widetilde{v}_1 \wedge \cdots \wedge \ddc \widetilde{v}_{n-m}$. On the other hand, since $\B_2$ is a ball,  we can express $h= h_1 - h_2$ where $h_1, h_2$ are smooth p.s.h.\ functions such that $h_1= h_2=0$ on $\partial \B_2$. We deduce that $\eta \in \mathcal{A}$. This  finishes the proof.  
\endproof

For the proof of Proposition \ref{prop:blocki-cegrell-current}, we need the following result about Monge-Amp\`ere measures of envelopes. The first part is classical (see \cite{Bedford_Taylor_76,Walsh}), while the second part is contained in the proof of Theorem 1.1 in \cite{blocki:domain-MA}.

\begin{lemma} \label{lemma:blocki-envelope}
Let $\B_1 \Subset \B_2 \Subset  \Omega$ be balls compactly contained in $\Omega$. For a negative continuous function $v \in \PSH(\Omega)$, set $$ \widetilde{v} : = \sup \{  w \in \PSH(\B_2):  w <  v \,\text{ on }\, \B_1 \,\text{ and }\, w<  0 \, \text{ on } \, \B_2\}.$$

Then $\widetilde{v}$ is a p.s.h.\ function on $\B_2$ which is continuous on $\overline \B_2$ and satisfies
\begin{enumerate}[nosep]
\item  $\widetilde v = 0$ on $\partial \B_2$,
\item  $\widetilde v = v$ on $\overline{ \B}_1$,
\item $(\ddc \widetilde v)^n = 0 $ on $\B_2 \setminus \overline{ \B}_1$.
\end{enumerate}
\vskip3pt
Moreover, if $u \in \mathcal D(\Omega)$, then for any sequence $(u_\ell)_{\ell \geq 1}$  of smooth p.s.h.\ functions on $\Omega$ decreasing to $u$, we have $$\sup_{\ell \geq 1}\int_{\B_2} (\ddc \widetilde{u}_\ell)^n <+\infty.$$
\end{lemma}

\begin{proof}[Proof of Proposition \ref{prop:blocki-cegrell-current}]  Using Lemma \ref{le-dense-hddcv}, the proof is parallel to that of \cite[Theorem 1.1]{blocki:domain-MA}. We include the main differences in the argument for completeness. Since the problem is local, in order to get the desired assertion, it suffices to prove that  there exists a current $S_m$ on $\Omega$ such that   for every ball $\B_1 \Subset \Omega$ and  every sequence $(u^\ell_j)_{\ell \geq 1}$ of smooth p.s.h.\ functions on $\Omega$ decreasing to $u_j$ for $1 \le j \le m$,  we have  $\ddc u^\ell_1 \wedge \cdots \wedge \ddc u^\ell_m \to S_m$  on  $\B_1$ as  $ \ell \to \infty$.  

Let $\B_2 \Subset \Omega$ be  a ball containing $\overline \B_1$.   Let $h,v_1, \ldots, v_{n-m} \in \PSH(\B_2) \cap \cali{C}^0(\overline \B_2)$ be functions vanishing on  $\partial \B_2$. Put $\eta:=h \, \ddc v_1 \wedge \cdots \wedge \ddc v_{n-m}$. Let $\widetilde{u}_{j}$ be the envelope constructed from $u_{j}$ as in Lemma \ref{lemma:blocki-envelope} for $1 \le j \le m$. We have that 
\begin{align}\label{eq-vngabangvj42}
\widetilde{u}_{j}= u_j \quad \text{ on} \quad \B_1,
\end{align}
$\widetilde{u}_{j}$ is continuous up to $\partial \B_2$ and is equal to $0$ on $\partial \B_2$ for $1 \le j \le m$. Put 
$$S^\ell_m:= \ddc u^\ell_1 \wedge \cdots \wedge \ddc u^\ell_m, \quad  \widetilde{S}^\ell_m:= \ddc \widetilde{u}^\ell_1 \wedge \cdots \wedge \ddc \widetilde{u}^\ell_m.$$
We will prove that $\langle \widetilde S^\ell_m, \eta \rangle$ is convergent. By \cite[Corollary 5.6]{cegrell:def-MA}, 
we have 
\begin{align*}
\int_{\B_2} & \ddc \widetilde{u}^{\ell}_1 \wedge \cdots \wedge \ddc \widetilde{u}^{\ell}_{m}
\wedge \ddc v_1 \wedge \cdots \wedge \ddc v_{n-m}
\le  \\
& \bigg(\int_{\B_2} \big(\ddc \widetilde{u}^{\ell}_1\big)^n\bigg)^{1/n} \cdots \, \, \, \bigg(\int_{\B_2} \big(\ddc \widetilde{u}^{\ell}_m\big)^n\bigg)^{1/n}\notag \cdot \,\, \bigg(\int_{\B_2} \big(\ddc v_1\big)^n\bigg)^{1/n} \cdots \,\,\, \bigg(\int_{\B_2} \big(\ddc v_{n-m}\big)^n\bigg)^{1/n}.
\end{align*}
This combined with Lemma \ref{lemma:blocki-envelope} yields that $\langle \widetilde S^\ell_m, \eta \rangle$ is of uniformly bounded  as $\ell \to \infty$. With this last property, we can follow the exact same arguments from the proof of \cite[Theorem 1.1]{blocki:domain-MA}. This gives that $\lim_{\ell \to \infty}\langle \widetilde S^\ell_m, \eta \rangle$ exists and  is independent of the choice of the sequences $(u^\ell_j)_{\ell \geq 1}$.  Using this and Lemma \ref{le-dense-hddcv},  for every smooth form $\phi$ compactly supported in $\B_1$, we obtain that $\langle \widetilde S^\ell_m,\phi \rangle$ converges to a number independent of the choice of $(u^\ell_j)_{\ell \geq 1}$ as $\ell \to \infty$. On the other hand, by (\ref{eq-vngabangvj42}), we get 
$$\langle S^\ell_m,\phi \rangle=\langle \widetilde S^\ell_m,\phi \rangle.$$
Consequently,  the limit $\lim_{\ell \to \infty}\langle  S^\ell_m,\phi \rangle$ exists and is independent of the choice of $(u^\ell_j)_{\ell \geq 1}$ as $\ell \to \infty$. Hence, for $S_m$ defined by putting $\langle S_m, \phi\rangle:=\lim_{\ell \to \infty}\langle \widetilde S^\ell_m,\phi \rangle$ satisfies the desired property.    This concludes the proof of Proposition \ref{prop:blocki-cegrell-current}.
\end{proof}

\begin{theorem} \label{thm:MA=DS}
Let $\Omega$  be a domain in $\C^n$ and let $u_1,u_2, \ldots, u_m$, $1 \leq m \leq n$ be functions in  $\mathcal D(\Omega)$. Then, the Dinh-Sibony product of  $\ddc u_1, \cdots, \ddc u_m$ is well-defined and
\begin{equation} \label{eq-BlockicegreallDS}
\ddc u_1 \, \Curlywedge \, \ldots \, \Curlywedge \, \ddc u_m = \ddc u_1 \wedge \ldots \wedge \ddc u_m.
\end{equation}
In particular, for $u \in \mathcal D (\Omega)$, the Dinh-Sibony Monge-Amp\` ere operator $u \mapsto (\ddc u)^{\Curlywedge n} $ is well-defined and coincides with the usual one.
\end{theorem}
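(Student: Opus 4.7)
The plan is to apply Corollary~\ref{cor:DS-vs-classical} to the data consisting of the $m-1$ p.s.h.\ functions $u_1,\ldots,u_{m-1}$ and the positive closed $(1,1)$-current $T:=\ddc u_m$. Under this choice we have $p=1$ and $m-1+p=m\le n$, so the bidegree hypothesis is met, and the conclusion of the corollary becomes exactly \eqref{eq-BlockicegreallDS}. What remains is to verify the assumption of the corollary, namely that the mixed product $\ddc u_1\wedge\cdots\wedge\ddc u_{m-1}\wedge T=\ddc u_1\wedge\cdots\wedge\ddc u_m$ is \emph{classically well-defined} in the sense of Definition~\ref{def-bedford-taylor}. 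Unpacking that definition, this requires, for every non-empty subset $\{j_1,\ldots,j_k\}\subset\{1,\ldots,m-1\}$, two properties: (a) an inductive local integrability --- $u_{j_r}$ is locally integrable with respect to the trace measure of $\ddc u_{j_{r+1}}\wedge\cdots\wedge\ddc u_{j_k}\wedge\ddc u_m$ for $r=k-1,\ldots,1$; and (b) continuity of $\ddc u_{j_1}\wedge\cdots\wedge\ddc u_{j_k}\wedge\ddc u_m$ along decreasing sequences of smooth p.s.h.\ approximations.

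Condition (b) is exactly the content of Proposition~\ref{prop:blocki-cegrell-current}, applied to each subfamily of $\{u_1,\ldots,u_m\}\subset\mathcal D(\Omega)$: the mixed current is defined there as a common limit of smooth approximations, independent of the chosen sequences. Condition (a) is the main obstacle, since local integrability is not directly built into the definition of $\mathcal D(\Omega)$. To verify it, the plan is to revisit the envelope strategy used in the proof of Proposition~\ref{prop:blocki-cegrell-current}: for balls $\B_1\Subset\B_2\Subset\Omega$ and smooth p.s.h.\ decreasing sequences $u_j^\ell\searrow u_j$, replace each $u_j^\ell$ on $\B_2$ by its envelope $\widetilde u_j^\ell$ from Lemma~\ref{lemma:blocki-envelope}. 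The $\widetilde u_j^\ell$ coincide with $u_j^\ell$ on $\B_1$, vanish on $\partial\B_2$, and satisfy the uniform bound $\sup_\ell\int_{\B_2}(\ddc\widetilde u_j^\ell)^n<+\infty$. Cegrell's H\"older-type inequality \cite[Corollary~5.6]{cegrell:def-MA} then yields, for every mixed product under consideration, uniform upper bounds in $\ell$ for integrals of the form $\int_{\B_2}|\widetilde u_{j_r}^\ell|\,\ddc\widetilde u_{j_{r+1}}^\ell\wedge\cdots\wedge\ddc\widetilde u_{j_k}^\ell\wedge\ddc\widetilde u_m^\ell\wedge\beta^{n-k+r-1}$, where $\beta$ is the standard K\"ahler form on $\C^n$. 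Monotone convergence, together with the fact that $\widetilde u_j^\ell=u_j^\ell$ on $\B_1$, then delivers the desired local integrability on $\B_1$.

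With (a) and (b) established, Corollary~\ref{cor:DS-vs-classical} yields the equality \eqref{eq-BlockicegreallDS}, and the last assertion about the Dinh-Sibony Monge-Amp\`ere operator $(\ddc u)^{\Curlywedge n}$ follows by specializing to $u_1=\cdots=u_n=u$.
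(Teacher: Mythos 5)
Your plan routes the proof through Corollary~\ref{cor:DS-vs-classical}, which requires showing that $\ddc u_1 \wedge \cdots \wedge \ddc u_{m-1} \wedge \ddc u_m$ is \emph{classically well-defined} in the sense of Definition~\ref{def-bedford-taylor}. That is a genuinely stronger hypothesis than what the result needs, and the verification you sketch for it has a gap. The inequality you cite, \cite[Corollary~5.6]{cegrell:def-MA}, controls unweighted mixed Monge--Amp\`ere masses
$\int_{\B_2}\ddc \widetilde u^\ell_{j_1}\wedge\cdots\wedge\ddc \widetilde u^\ell_{j_k}\wedge \ddc v_1\wedge\cdots\wedge\ddc v_{n-k}$
in terms of the pure masses $\int_{\B_2}(\ddc \widetilde u^\ell_{j_i})^n$; it says nothing about the \emph{weighted} integrals $\int_{\B_2}|\widetilde u^\ell_{j_r}|\,\ddc\widetilde u^\ell_{j_{r+1}}\wedge\cdots\wedge\beta^{n-k+r-1}$ that you need to bound in order to get the inductive local integrability of condition~(a). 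Bounding such weighted integrals requires a different (energy-type) estimate, and---more importantly---it is not at all clear that membership in $\mathcal D(\Omega)$ entails these integrability conditions at all: B\l ocki's characterization of $\mathcal D(\Omega)$ involves bounds on quantities of the form $\int |u|^{n-p-2}|\nabla u|^2(\ddc u)^p\wedge\beta^{n-p-1}$ and does not give the Bedford--Taylor-type local integrability of $u$ against lower-order mixed products directly. So the heart of step~(a) is unjustified and likely cannot be pushed through in this generality.

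The paper's proof avoids this obstacle entirely by applying Theorem~\ref{the-cegrell-DS} directly rather than via Corollary~\ref{cor:DS-vs-classical}. Condition~(i) there is verified by Proposition~\ref{prop:blocki-cegrell-current}, exactly as you do for step~(b). But condition~(ii) only asks that $\nu(u_k,\cdot)=0$ almost everywhere with respect to the trace measure of $R_J$ --- a far weaker requirement than local integrability of $u_k$ against $R_J$. This holds trivially: for $u\in\mathcal D(\Omega)$ the set $\{\nu(u,\cdot)\ge\varepsilon\}$ is finite for each $\varepsilon>0$ (see \cite{cegrell:def-MA}), so $\{\nu(u_k,\cdot)>0\}$ is countable, while $R_J$ with $J\subsetneq\{1,\ldots,m-1\}$ has bidegree strictly below $(n,n)$ and thus puts no mass on points. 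You should replace the integrability argument with this Lelong-number argument and invoke Theorem~\ref{the-cegrell-DS} rather than its corollary; the rest of your plan (condition~(b), and the final specialization to $u_1=\cdots=u_n=u$) is fine.
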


\begin{proof}
We'll apply Theorem \ref{the-cegrell-DS} to $u_1,u_2,\ldots,u_{m-1}$ and $T = \ddc u_m$. Let $J \subset  \{1, \ldots, m-1\}$. Then by Proposition \ref{prop:blocki-cegrell-current},  the current $R_J = \bigwedge_{j \in J} \ddc u_{j} \wedge \ddc u_m$ is well-defined. Now we check  the hypothesis of Theorem \ref{the-cegrell-DS} for $R_J$. Let $(u^\ell_j)_\ell$ be a sequence of smooth p.s.h.\ functions decreasing to $u_j$ for $j \in J$. We need to show that $\wedge_{j \in J} \ddc u^\ell_j \wedge \ddc u_m$ converges to $R_J$ as $\ell \to \infty$.  

Let $(u^\ell_m)_\ell$ be a sequence of smooth functions decreasing to $u_m$. Let $\Phi$ be a smooth test form with compact support and $\epsilon>0$ a constant. For every $\ell$, since $\ddc u^\ell_m\to \ddc u_m$, there exists $s_\ell \in \N$ such that 
\begin{align}\label{ine-MADS}
\big| \langle \wedge_{j \in J} \ddc u^\ell_j \wedge \ddc u^{s_\ell}_m- \wedge_{j \in J} \ddc u^\ell_j \wedge \ddc u_m, \Phi\rangle \big| \le \epsilon.
\end{align}
We can choose $s_\ell$ so that $s_\ell$ is decreasing in $\ell$. Hence, $u^{s_\ell}_m$ decreases to $u_m$. By  Proposition \ref{prop:blocki-cegrell-current}, we get   $\wedge_{j \in J} \ddc u^\ell_j \wedge \ddc u^{s_\ell}_m \to R_J$ as $\ell \to \infty$. This combined with (\ref{ine-MADS}) gives 
$$ \big| \langle \wedge_{j \in J} \ddc u^\ell_j \wedge \ddc u^{s_\ell}_m- R_J, \Phi\rangle \big| \le 2\epsilon$$
for $\ell$ big enough. Hence, $\wedge_{j \in J} \ddc u^\ell_j \wedge \ddc u_m$ converges to $R_J$ as $\ell \to \infty$. In other words, we have checked the  hypothesis of Theorem \ref{the-cegrell-DS} for $R_J$. The desired assertion follows.  The proof is finished.
\end{proof}

\bigskip

\noindent
\Addresses
\end{document}